\newtheorem{theorem}{Theorem}[section]
\newtheorem{lemma}[theorem]{Lemma}
\newtheorem{conjecture}[theorem]{Conjecture}
\newenvironment{definition}[1][Definition]{\begin{trivlist}
\item[\hskip \labelsep {\bfseries #1}]}{\end{trivlist}}
\title{Extremal Numbers for $2 \rightarrow 1$ Directed Hypergraphs with Two Edges Part I: The Nondegenerate Cases}
\author{Alex Cameron}
\begin{document}

\maketitle

\begin{abstract}
Let a $2 \rightarrow 1$ directed hypergraph be a 3-uniform hypergraph where every edge has two tail vertices and one head vertex. For any such directed hypergraph, $F$, let the $n$th extremal number of $F$ be the maximum number of edges that any directed hypergraph on $n$ vertices can have without containing a copy of $F$. There are actually two versions of this problem: the standard version where every triple of vertices is allowed to have up to all three possible directed edges and the oriented version where each triple can have at most one directed edge. In this paper, we determine the standard extremal numbers and the oriented extremal numbers for three different directed hypergraphs. Each has exactly two edges, and of the seven (nontrivial) ($2 \rightarrow 1$)-graphs with exactly two edges, these are the only three with extremal numbers that are cubic in $n$. The standard and oriented extremal numbers for the other four directed hypergraphs with two edges are determined in a companion paper \cite{cameron2015deg}.
\end{abstract}

\section{Introduction}

The combinatorial structure treated in this paper is a $2 \rightarrow 1$ directed hypergraph defined as follows.

\begin{definition}
A \emph{$2 \rightarrow 1$ directed hypergraph} is a pair $H = (V,E)$ where $V$ is a finite set of \emph{vertices} and the set of \emph{edges} $E$ is some subset of the set of all pointed $3$-subsets of $V$. That is, each edge is three distinct elements of $V$ with one marked as special. This special vertex can be thought of as the \emph{head} vertex of the edge while the other two make up the \emph{tail set} of the edge. If $H$ is such that every $3$-subset of V contains at most one edge of $E$, then we call $H$ \emph{oriented}. For a given $H$ we will typically write its vertex and edge sets as $V(H)$ and $E(H)$. We will write an edge as $ab \rightarrow c$ when the underlying $3$-set is $\{a,b,c\}$ and the head vertex is $c$.
\end{definition}

For simplicity from this point on we will always refer to $2 \rightarrow 1$ directed hypergraphs as just \emph{graphs} or sometimes as \emph{$(2 \rightarrow 1)$-graphs} when needed to avoid confusion. This structure comes up as a particular instance of the model used to represent definite Horn formulas in the study of propositional logic and knowledge representation \cite{angluin1992, russell2002}. Some combinatorial properties of this model have been recently studied by Langlois, Mubayi, Sloan, and Gy. Tur\'{a}n in \cite{langlois2009} and \cite{langlois2010}. In particular, they looked at the extremal numbers for a couple of different small graphs. Before we can discuss their results we will need the following definitions.

\begin{definition}
Given two graphs $H$ and $G$, we call a function $\phi:V(H) \rightarrow V(G)$ a homomorphism if it preserves the edges of $H$: \[ab \rightarrow c \in E(H) \implies \phi(a)\phi(b) \rightarrow \phi(c) \in E(G).\] We will write $\phi:H \rightarrow G$ to indicate that $\phi$ is a homomorphism.
\end{definition}

\begin{definition}
Given a family $\mathcal{F}$ of graphs, we say that a graph $G$ is \emph{$\mathcal{F}$-free} if no injective homomorphism $\phi:F \rightarrow G$ exists for any $F \in \mathcal{F}$. If $\mathcal{F} = \{F\}$ we will write that $G$ is $F$-free.
\end{definition}

\begin{definition}
Given a family $\mathcal{F}$ of graphs, let the \emph{$n$th extremal number} $\text{ex}(n,\mathcal{F})$ denote the maximum number of edges that any $\mathcal{F}$-free graph on $n$ vertices can have. Similarly, let the \emph{$n$th oriented extremal number} $\text{ex}_o(n,\mathcal{F})$ be the maximum number of edges that any $\mathcal{F}$-free oriented graph on $n$ vertices can have. Sometimes we will call the extremal number the \emph{standard} extremal number or refer to the problem of determining the extremal number as the \emph{standard version} of the problem to distinguish these concepts from their oriented counterparts. As before, if $\mathcal{F} = \{F\}$, then we will write $\text{ex}(n,F)$ or $\text{ex}_o(n,F)$ for simplicity.
\end{definition}

These are often called Tur\'{a}n-type extremal problems after Paul Tur\'{a}n due to his important early results and conjectures concerning forbidden complete $r$-graphs \cite{turan1941, turan1954, turan1961}. Tur\'{a}n problems for uniform hypergraphs make up a large and well-known area of research in combinatorics, and the questions are often surprisingly difficult.

Extremal problems like this have also been considered for directed graphs and multigraphs (with bounded multiplicity) in \cite{brown1973} and \cite{brown1969} and for the more general directed multi-hypergraphs in \cite{brown1984}. In \cite{brown1969}, Brown and Harary determined the extremal numbers for several types of specific directed graphs. In \cite{brown1973}, Brown, Erd\H{o}s, and Simonovits determined the general structure of extremal sequences for every forbidden family of digraphs analogous to the Tur\'{a}n graphs for simple graphs.

The model of directed hypergraphs studied in \cite{brown1984} have $r$-uniform edges such that the vertices of each edge is given a linear ordering. However, there are many other ways that one could conceivably define a uniform directed hypergraph. The graph theoretic properties of a more general definition of a nonuniform directed hypergraph were studied by Gallo, Longo, Pallottino, and Nguyen in \cite{gallo1993}. There a directed hyperedge was defined to be some subset of vertices with a partition into head vertices and tail vertices.

Recently in \cite{cameron2016}, this author tried to capture many of these possible definitions for ``directed hypergraph" into one umbrella class of relational structures called generalized directed hypergraphs. The structures in this class include the uniform and simple versions of undirected hypergraphs, the totally directed hypergraphs studied in \cite{brown1984}, the directed hypergraphs studied in \cite{gallo1993}, and the $2 \rightarrow 1$ model studied here and in \cite{langlois2009,langlois2010}.

In \cite{langlois2009, langlois2010}, they study the extremal numbers for two different graphs with two edges each. They refer to these two graphs as the 4-resolvent and the 3-resolvent configurations after their relevance in propositional logic. Here, we will denote these graphs as $R_4$ and $R_3$ respectively and define them formally as \[V(R_4) = \{a,b,c,d,e\} \text{ and } E(R_4) = \{ab \rightarrow c, cd \rightarrow e\}\] and \[V(R_3) = \{a,b,c,d\} \text{ and } E(R_3) = \{ab \rightarrow c, bc \rightarrow d\}.\]

In \cite{langlois2010} the authors determined $\text{ex}(n,R_4)$ for sufficiently large $n$, and in \cite{langlois2009} they determined a sequence of numbers asymptotically equivalent to the sequence of numbers $\text{ex}(n,R_3)$ as $n$ increases to infinity. In these papers, the authors discuss a third graph with two edges which they call an Escher configuration because it calls to mind the Escher piece where two hands draw each other. This graph is on four vertices, $\{a,b,c,d\}$ and has edge set $\{ab \rightarrow c,cd \rightarrow b\}$. We will denote it by $E$. These three graphs actually turn out to be the only three nondegenerate graphs with exactly two edges on more than three vertices.

\begin{definition}
A graph $H$ is \emph{degenerate} if its vertices can be partitioned into three sets, $V(H) = T_1 \cup T_2 \cup K$ such that every edge of $E(H)$ is of the form $t_1t_2 \rightarrow k$ for some $t_1 \in T_1$, $t_2 \in T_2$, and $k \in K$.
\end{definition}

An immediate consequence of a result shown in \cite{cameron2016} is that the extremal numbers for a graph $H$ are cubic in $n$ if and only if $H$ is not degenerate.

In our model of directed hypergraph, there are nine different graphs with exactly two edges. Of these, four are not degenerate. One of these is the graph on three vertices with exactly two edges, $V=\{a,b,c\}$ and $E=\{ab \rightarrow c, ac \rightarrow b\}$. It is trivial to see that both the standard and oriented extremal numbers for this graph are ${n \choose 3}$. The other three nondegenerate graphs are $R_4$, $R_3$, and $E$. We will determine both the standard and oriented extremal numbers for each of these graphs in Sections 2, 3, and 4 respectively. The extremal numbers for the five degenerate cases are determined in a companion paper \cite{cameron2015deg}.

The proofs that follow rely heavily on the concept of a link graph. For undirected $r$-graphs, the link graph of a vertex is the $(r-1)$-graph induced on the remaining vertices such that each $(r-1)$-set is an $(r-1)$-edge if and only if that set together with the specified vertex makes an $r$-edge in the original $r$-graph \cite{keevash2011}. In the directed hypergraph model here, there are a few ways we could define the link graph of a vertex. We will need the following three.

\begin{definition}
Let $x \in V(H)$ for some graph $H$. The \emph{tail link graph} of $x$ $T_x$ is the simple undirected 2-graph on the other $n-1$ vertices of $V(H)$ with edge set defined by all pairs of vertices that exist as tails pointing to $x$ in some edge of $H$. That is, $V(T_x) = V(H) \setminus \{x\}$ and \[ E(T_x) = \{yz : yz \rightarrow x \in H\}.\] The size of this set, $|T_x|$ will be called the \emph{tail degree} of $x$. The degree of a particular vertex $y$ in the tail link graph of $x$ will be denoted $d_x(y)$.

Similarly, let $D_x$ be the \emph{directed link graph} of $x$ on the remaining $n-1$ vertices of $V(H)$. That is, let $V(D_x) = V(H) \setminus \{x\}$ and \[E(D_x) = \{y \rightarrow z : xy \rightarrow z \in E(H)\}.\]

And let $L_x$ denote the \emph{total link graph} of $x$ on the remaining $n-1$ vertices: $V(L_x) = V(H) \setminus \{x\}$ and \[E(L_x) = E(T_x) \cup E(D_x).\] So $L_x$ is a partially directed 2-graph.
\end{definition}

\section{The 4-resolvent graph $R_4$}

\begin{figure}
  \centering
      \begin{tikzpicture}
			\filldraw [black] (-2,2) circle (1pt);
			\filldraw [black] (-2,0) circle (1pt);
			\draw[thick] (-2,2) -- (-2,0);
			\filldraw [black] (0,1) circle (1pt);
			\draw[thick, ->] (-2,1) -- (0,1);
			\filldraw [black] (0,-1) circle (1pt);
			\draw[thick] (0,1) -- (0,-1);
			\draw[thick,->] (0,0) -- (2,0);
			\filldraw [black] (2,0) circle (1pt);
		\end{tikzpicture}
  \caption{$R_4$}
  \label{C}
\end{figure}

In \cite{langlois2009}, the authors gave a simple construction for an $R_4$-free graph. Partition the vertices into sets $T$ and $K$ and take all possible edges with tail sets in $T$ and head vertex in $K$. When there are $n$ vertices, this construction gives ${t \choose 2}(n-t)$ edges where $t=|T|$. This is optimized when $t = \left\lceil \frac{2n}{3} \right\rceil$. In \cite{langlois2010}, they showed that this number of edges is maximum for $R_4$-free graphs for sufficiently large $n$ and that the construction is the unique extremal $R_4$-free graph.

\begin{figure}
	\centering
	\begin{tikzpicture} [scale=0.5]
		\filldraw[color=black,fill=blue!5] (0,0) ellipse (2 and 3);
		\node at (-2,3) {$T$};
		
		\filldraw[color=black,fill=blue!5] (6,0) ellipse (1 and 2);
		\node at (7,2) {$K$};
		
		\node [left] at (-2,0) {${n-k \choose 2}$ tail pairs};
		\node [right] at (7,0) {$k$ heads};

		\filldraw[black] (0,-1) circle (1pt);
		\filldraw[black] (0,1) circle (1pt);
		\filldraw[black] (6,0.5) circle (1pt);
		\draw[thick] (0,-1) -- (0,1);
		\draw[thick,->] (0,0) -- (6,0.5);
	\end{tikzpicture}
	\caption{The lower bound construction for a graph with no $R_4$.}
\end{figure}
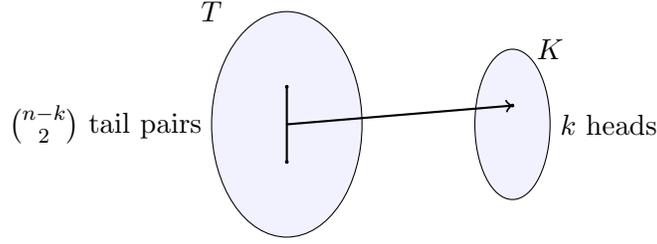

We now give an alternate shorter proof that  $\left\lfloor \frac{n}{3} \right\rfloor{\left\lceil \frac{2n}{3} \right\rceil \choose 2}$ is an upper bound on the extremal number for $R_4$ for sufficiently large $n$ in both the standard and oriented versions of the problem. The proof also establishes the uniqueness of the construction.

\begin{theorem}
\label{TypeC}
For all $n \geq 29$, \[ex_o(n,R_4) = \left\lfloor \frac{n}{3} \right\rfloor{\left\lceil \frac{2n}{3} \right\rceil \choose 2}\]and for all $n \geq 70$, \[ex(n,R_4) = \left\lfloor \frac{n}{3} \right\rfloor{\left\lceil \frac{2n}{3} \right\rceil \choose 2}.\]Moreover, in each case there is one unique extremal construction up to isomorphism when $n \equiv 0,1 \text{ mod } 3$ and exactly two when $n \equiv 2 \text{ mod } 3$.
\end{theorem}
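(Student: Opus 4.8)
The plan is to prove the lower bound by exhibiting the construction, then to obtain a matching upper bound through a link-graph analysis, and finally to bootstrap the asymptotic bound into the exact formula together with the uniqueness count. For the lower bound, partition $V$ into $K$ with $|K| = \lfloor n/3\rfloor$ and $T$ with $|T| = \lceil 2n/3\rceil$, and take every edge $ab \to c$ with $\{a,b\} \subseteq T$ and $c \in K$; this graph is oriented and has $\lfloor n/3\rfloor \binom{\lceil 2n/3\rceil}{2}$ edges. It is $R_4$-free because a copy of $R_4$ requires a vertex $c$ that is simultaneously the head of one edge and a tail of another, whereas here every head lies in $K$, every tail in $T$, and $K \cap T = \emptyset$.

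For the upper bound I would first reformulate $R_4$-freeness locally. A copy of $R_4$ is a pair of edges $ab \to c$ and $cd \to e$ on five distinct vertices; equivalently it is a vertex $c$ together with an edge $\{a,b\} \in E(T_c)$ and an edge $d \to e \in E(D_c)$ whose underlying pairs are disjoint. Hence $G$ is $R_4$-free if and only if for every vertex $c$ the families $T_c$ and $D_c$ (reading directed links as unordered pairs) are \emph{cross-intersecting}: every edge of $T_c$ meets every edge of $D_c$. From this I extract the key dichotomy: for each $c$, either $T_c$ is an intersecting family of pairs, so $|T_c| \le n-2$, or $T_c$ contains two disjoint pairs, in which case every edge of $D_c$ must meet both of them and so $|D_c| \le 4$. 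I then partition $V = K \sqcup T$ with $K = \{c : |T_c| > n-2\}$, so that each $c \in K$ is used as a tail at most four times.

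Writing $|E(G)| = \sum_c |T_c|$ and $k = |K|$, I would bound the contribution of heads in $T$ by $(n-2)(n-k)$, the edges having a tail in $K$ by $\sum_{c \in K} |D_c| \le 4k$, and the remaining edges (head in $K$, both tails in $T$) by $k\binom{n-k}{2}$, giving
\[|E(G)| \le k\binom{n-k}{2} + (n-2)(n-k) + 4k.\]
Every edge is counted at least once, so this is a valid upper bound. The dominant term $f(k) = k\binom{n-k}{2}$ is maximized over integers at $k = \lfloor n/3 \rfloor$, and a short computation shows $f(m) = f(m+1)$ exactly when $n = 3m+2$, while the maximizer is unique for $n \equiv 0,1 \pmod 3$; this is the source of the one-versus-two count in the statement and pins down the candidate partition sizes.

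The main obstacle is that the error terms $(n-2)(n-k) + 4k$ have order $n^2$, the same order as the gap between $f$ at its maximum and at nearby integers, so the crude bound overshoots the exact target. Closing this gap is the technical heart of the proof and is where the explicit thresholds $n \ge 29$ and $n \ge 70$ enter. I would run a stability argument: any $G$ with at least the target number of edges must have $k$ within $O(\sqrt{n})$ of $n/3$ and almost all edges of the form tails in $T$, head in $K$; then I would show that each deviation from the pure construction — an edge whose head lies in $T$, an edge using a vertex of $K$ as a tail, or a missing construction edge — forces a strict net loss against the target, so $G$ must be exactly the extremal graph. The oriented and standard cases use the same scheme, the standard case demanding the larger threshold because a triple may now carry up to three edges, which weakens the counting and must be absorbed in the cleanup; the two non-isomorphic extremal graphs for $n \equiv 2 \pmod 3$ then arise precisely from the two admissible values $k = m$ and $k = m+1$.
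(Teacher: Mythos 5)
Your lower bound and your local reformulation of $R_4$-freeness (for every vertex $c$, the pair families $T_c$ and $D_c$ must be cross-intersecting) are correct, and your dichotomy and the resulting count
\[
|E(G)| \le k\binom{n-k}{2} + (n-2)(n-k) + 4k
\]
is a valid inequality (modulo the minor point that in the standard model a transversal pair can carry two directed edges, so the constant $4$ becomes $8$). But, as you yourself observe, this overshoots the target by $\Theta(n^2)$, and everything you say after that point is a plan rather than a proof. The ``stability argument'' you defer to is the entire content of the theorem: your bound only locates $k$ to within $O(\sqrt n)$ of $n/3$, and the term $(n-2)(n-k)$ permits every vertex of your class $T$ to serve as a head up to $n-2$ times, whereas in the extremal graph such vertices are heads zero times. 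Nothing in your framework explains why that $\Theta(n^2)$ of slack cannot be realized, and ``each deviation forces a strict net loss'' is not something you can verify edge-by-edge, since a single vertex can absorb $n-2$ deviant edges whose cost is only visible globally.

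The missing idea, which is how the paper closes the gap, is to classify vertices by \emph{role} rather than by degree: let $B$ be the set of vertices that occur both as a head and as a tail. For $v\in B$ the total link graph $L_v$ contains an undirected and a directed edge, all such pairs must meet, and a short case analysis shows $|E(L_v)|\le c(n-4)+O(1)$ with $c=2$ (oriented) or $c=5$ (standard); so each vertex of $B$ is incident to only $O(n)$ edges in total. Every edge avoiding $B$ has its tails among head-free vertices and its head among tail-free vertices, hence the edges avoiding $B$ number at most $\left\lfloor \frac{n-b}{3}\right\rfloor\binom{\left\lceil 2(n-b)/3\right\rceil}{2}$ --- the extremal count on $n-b$ vertices, not on $n$. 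Since the extremal function loses $\Theta(n^2)$ per deleted vertex while each vertex of $B$ contributes only $O(n)$ edges, one gets $b=0$ for $n$ beyond the stated thresholds, and uniqueness (one construction, or two when $n\equiv 2 \bmod 3$, exactly as in your computation of the tie $f(m)=f(m+1)$ at $n=3m+2$) follows at once. You would need to either import this deletion-and-recurse comparison into your scheme or find a genuinely different way to kill the $(n-2)(n-k)$ term; as written, the proposal does not prove the statement.
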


\begin{proof}
In either the standard or the oriented model, let $H$ be an $R_4$-free graph on $n$ vertices. Partition $V(H)$ into sets $T \cup K \cup B$ where $T$ is the set of vertices that appear in tail sets of edges but never appear as the head of any edge, $K$ is the set of vertices that do not belong to any tail set, and $B$ is the set that appear as both heads and tails.

If $B$ is empty, then $H$ is a subgraph of the lower bound construction and we are done. So assume that there exists some $v \in B$. The link graph $L_v$ must contain at least one undirected edge and at least one directed edge. If any undirected edge is independent from any directed edge in $L_v$, then $v$ would be the intersection vertex for an $R_4$ in $H$. Therefore, every directed edge in $L_v$ is incident to every undirected edge.

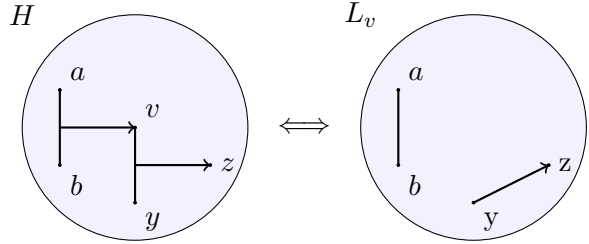
\begin{figure}
	\centering
		\begin{tikzpicture} [scale=0.5]
			\filldraw[color=black, fill=blue!5] (0,1) circle [radius=3];
			\node at (-3,4) {$H$};
		
			\filldraw [black] (-2,2) circle (1pt);
			\filldraw [black] (-2,0) circle (1pt);
			\draw[thick] (-2,2) -- (-2,0);
			\filldraw [black] (0,1) circle (1pt);
			\draw[thick, ->] (-2,1) -- (0,1);
			\filldraw [black] (0,-1) circle (1pt);
			\draw[thick] (0,1) -- (0,-1);
			\draw[thick,->] (0,0) -- (2,0);
			\filldraw [black] (2,0) circle (1pt);
			\node [above right] at (0,1) {$v$};
			\node [right] at (2,0) {$z$};
			\node [below right] at (0,-1) {$y$};
			\node [above right] at (-2,2) {$a$};
			\node [below right] at (-2,0) {$b$};
			
			\node at (4.5,1) {$\iff$};
			
			\filldraw[color=black, fill=blue!5] (9,1) circle [radius=3];
			\node at (6,4) {$L_v$};
			
			\filldraw [black] (7,2) circle (1pt);
			\node [above right] at (7,2) {$a$};
			\filldraw [black] (7,0) circle (1pt);
			\node [below right] at (7,0) {$b$};
			\draw[thick] (7,2) -- (7,0);
			\filldraw[black] (9,-1) circle (1pt);
			\node [below right] at (9,-1) {y};
			\filldraw[black] (11,0) circle (1pt);
			\node [right] at (11,0) {z};
			\draw[thick,->] (9,-1) -- (11,0);
		\end{tikzpicture}
	\caption{$H$ has an $R_4$ if and only if the link graph of some vertex $v$ contains a directed edges and an undirected edge that do not intersect.}
\end{figure}

We want to show that if $v \in B$, then $|E(L_v)| = O(n)$. Determining an upper bound on the number of edges in $L_v$ is equivalent to determining an upper bound on the number of red and blue edges on $n-1$ vertices such that each red edge is incident to each blue edge and there is at least one edge of each color.

If we are working in the oriented model where multiple edges on the same triple are not allowed then no pair of vertices in $L_x$ can hold more than one edge. If we are working in the standard model, then two vertices in this graph may have up to three edges between them, say two red and one blue.

First, let's consider the oriented version. In this case we have at least one edge of each color and they must be incident. So let $xy$ be blue and let $yz$ be red. Then all other edges must be incident to $x$, $y$, or $z$. Moreover, any edge from $x$ to the remaining $n-4$ vertices must be red since it is independent from $yz$ and any edge from $z$ to the remaining $n-4$ must be blue. Therefore, there are at most $2(n-4)$ edges from $\{x,y,z\}$ to the remaining $n-4$ vertices.

\begin{figure}
  \centering
  	\begin{tikzpicture}
		\filldraw[black] (0,6) circle (1pt);
		\node [above] at (0,6) {$y$};
		
		\filldraw[black] (-2,5) circle (1pt);
		\node [left] at (-2,5) {$x$};
		
		\filldraw[black] (2,5) circle (1pt);
		\node [right] at (2,5) {$z$};
		
		\draw[blue, thick] (0,6)--(-2,5);
		\draw[red, thick] (0,6)--(2,5);
		
		\filldraw[color=black, fill=blue!5] (0,2) ellipse (4 and 2);
		\node at (0,1) {$n-4$};
		
		\filldraw[black] (-3,2) circle (1pt);
		\filldraw[black] (-2,2) circle (1pt);
		\filldraw[black] (-1,2) circle (1pt);
		\node at (0.5,2) {$\cdots$};
		\filldraw[black] (2,2) circle (1pt);
		\filldraw[black] (3,2) circle (1pt);
		
		\draw[red, thick,dashed] (0,6)--(-1,2);
		\draw[blue, thick,dashed] (0,6)--(2,2);
		
		\draw[red, thick,dashed] (-2,5)--(-1,2);
		\draw[red, thick,dashed] (-2,5)--(-3,2);
		
		\draw[blue, thick,dashed] (2,5)--(3,2);
		\draw[blue, thick,dashed] (2,5)--(-2,2);
		
	\end{tikzpicture}
  \caption{A simple graph on $n-1$ vertices with red and blue edges such that each red edge is incident to each blue edge and there is at least one blue edge, $xy$, and at least one red edge, $yz$, can have no edge contained in the remaining $n-4$ vertices. Moreover, only red edges can go from $x$ to the remaining vertices and only blue edges can go from $z$ to the remaining vertices.}
\end{figure}
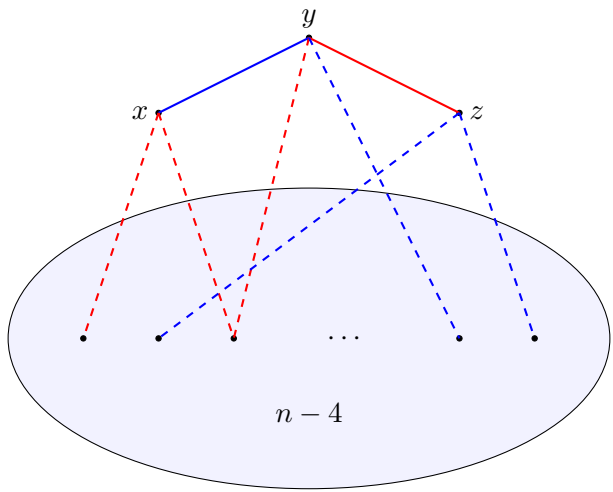

In the standard case our initial two red and blue edges may either be incident as before with $xy$ blue and $yz$ red or they might be incident in two vertices so that $xy$ holds both a red and a blue edge. If none of the first type of incidence exists, then there are at most 3 edges, all on $xy$.

\begin{figure}
	\centering
	\begin{tikzpicture}
		\filldraw[black] (0,6) circle (1pt);
		\node [above] at (0,6) {$y$};
		
		\filldraw[black] (-1,4) circle (1pt);
		\node [left] at (-1,4) {$x$};
		
		\filldraw[black] (1,4) circle (1pt);
		\node [right] at (1,4) {$z$};
		
		\draw[blue, thick] (0,6)--(-1,4);
		\draw[red, thick] (0,6)--(1,4);
		
		\filldraw[black] (5,6) circle (1pt);
		\node [right] at (5,6) {$y$};
		
		\filldraw[black] (5,4) circle (1pt);
		\node [right] at (5,4) {$x$};
		
		\draw[blue, thick] (5,6) .. controls (4.5,5) .. (5,4);
		\draw[red, thick] (5,6) .. controls (5.5,5) .. (5,4);
	\end{tikzpicture}
	\caption{When two vertices are allowed to have up to two red edges and one blue edge, then an adjacent red and blue edge pair is either incident in one or two vertices.}
\end{figure}
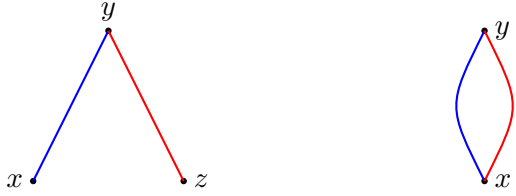

So let's assume that the first type of incidence exists - $xy$ is a blue edge and $yz$ is a red edge. As before, all other edges must be incident to these three vertices, any edge from $x$ to the remaining $n-4$ vertices must be red, and any edge from $z$ to these vertices must be blue. Since each pair can have up to two reds and one blue, then there are at most $2(n-4)$ edges from $x$, $3(n-4)$ edges from $y$, and $n-4$ from $z$ to the remaining vertices. Therefore, there are at most $5(n-4)$ additional edges.

In either the standard or oriented versions of the problem, edges that do not contain vertices of $B$ must have their tails in $T$ and their heads in $K$. So there are at most \[\left\lfloor \frac{n-b}{3} \right\rfloor{\left\lceil \frac{2(n-b)}{3} \right\rceil \choose 2}\] edges that do not intersect $B$. Hence, \[|E(H)| < \left\lfloor \frac{n-b}{3} \right\rfloor{\left\lceil \frac{2(n-b)}{3} \right\rceil \choose 2} + cnb\]where $c=2$ in the oriented case and $c=5$ in the standard case.

This expression is maximum on $b \in [0,n]$ only at the endpoint $b=0$ for all $n \geq 29$ when $c=2$ and for all $n \geq 70$ when $c=5$.

Therefore, we can never do better than the lower bound construction. Moreover, since $B$ must be empty to reach this bound, then the construction is unique when $n \equiv 0,1 \text{ mod } 3$. When $n \equiv 2 \text{ mod } 3$, then \[\left\lfloor \frac{n}{3} \right\rfloor{\left\lceil \frac{2n}{3} \right\rceil \choose 2}=\left\lceil \frac{n}{3} \right\rceil{\left\lfloor \frac{2n}{3} \right\rfloor \choose 2}\]so there are exactly two extremal constructions in that case.
\end{proof}

\section{The 3-resolvent graph $R_3$}

\begin{figure}
	\centering
	\begin{tikzpicture}
	\filldraw[black] (-1,0) circle (1pt);
	\filldraw[black] (1,0) circle (1pt);
	\filldraw[black] (0,2) circle (1pt);
	\filldraw[black] (2,1.75) circle (1pt);
	
	\draw[thick] (-1,0)--(1,0);
	\draw[thick,->] (0,0)--(0,2);
	\draw[thick] (1,0)--(0,2);
	\draw[thick,->] (0.5,1)--(2,1.75);
	\end{tikzpicture}
	\caption{$R_3$}
\end{figure}

In \cite{langlois2009}, the authors gave a simple construction for an $R_3$-free graph. Partition the vertices into sets $A$ and $B$ and take all possible edges with a tail set in $A$ and head vertex in $B$ plus all possible edges with a tail set in $B$ and a head in $A$. When there are $n$ vertices, this construction gives $(n-a){a \choose 2} + a{n-a \choose 2}$ edges where $a=|A|$. This is optimized when $a = \left\lceil \frac{n}{2} \right\rceil$. The authors showed that this number of edges is asymptotically optimal for $R_3$-free graphs.

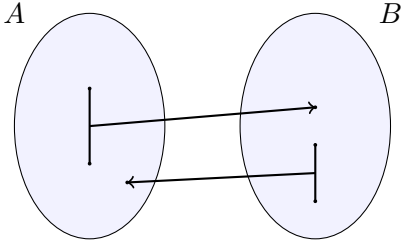
\begin{figure}
	\centering
	\begin{tikzpicture} [scale=0.5]
		\filldraw[color=black,fill=blue!5] (0,0) ellipse (2 and 3);
		\node at (-2,3) {$A$};
		
		\filldraw[color=black,fill=blue!5] (6,0) ellipse (2 and 3);
		\node at (8,3) {$B$};

		\filldraw[black] (0,-1) circle (1pt);
		\filldraw[black] (0,1) circle (1pt);
		\filldraw[black] (6,0.5) circle (1pt);
		\draw[thick] (0,-1) -- (0,1);
		\draw[thick,->] (0,0) -- (6,0.5);
		\filldraw[black] (6,-0.5) circle (1pt);
		\filldraw[black] (6,-2) circle (1pt);
		\filldraw[black] (1,-1.5) circle (1pt);
		\draw[thick] (6,-0.5) -- (6,-2);
		\draw[thick,->] (6,-1.25) -- (1,-1.5);
	\end{tikzpicture}
	\caption{The unique $R_3$-free extremal construction.}
\end{figure}

We show that in both the standard and the oriented versions of this problem that this construction is in fact the best that we can do. We will start with the oriented case since it is less technical.

 \subsection{The oriented version}
 
\begin{theorem}
\label{exE}
For all $n$, \[ex_o(n,R_3) = \left\lfloor \frac{n}{2} \right\rfloor \left\lceil \frac{n}{2} \right\rceil \frac{n-2}{2}.\]Moreover, there is one unique extremal $R_3$-free construction up to isomorphism for each $n$.
\end{theorem}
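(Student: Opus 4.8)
The plan is to translate the problem into a single extremal question about the \emph{tail graph} of $H$ and then solve that question. Let $G$ be the simple undirected graph on $V(H)$ whose edges are exactly the tail pairs, so $\{x,y\}\in E(G)$ iff $xy\to z\in E(H)$ for some $z$ (equivalently, $N_G(x)$ is the set of tail-partners of $x$, which is the source set of the directed link graph $D_x$). I would first record the characterization of $R_3$-freeness: in the oriented model, $H$ is $R_3$-free iff the head of every edge avoids the tail graph. Precisely, for each edge $xy\to z$ we must have $z\notin N_G(x)\cup N_G(y)$. If instead $z\in N_G(x)$, witnessed by an edge $xz\to w$, then $xy\to z$ and $xz\to w$ form an injective $R_3$ as soon as $w\neq y$; and $w=y$ cannot occur, since $xy\to z$ and $xz\to y$ would be two edges on the single triple $\{x,y,z\}$, which the oriented model forbids. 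Conversely, in any $R_3=\{ab\to c,\,bc\to d\}$ the pair $\{b,c\}$ is a tail pair, so the first head $c$ lies in $N_G(b)$. This is exactly the place where the oriented hypothesis does the work, which is why the standard version (where a head may legitimately coincide with a tail-neighbour) is treated separately.

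Next I would convert the edge count into a count on $G$. Since every edge of $H$ has a unique tail pair and head, and the head must be a common non-neighbour of the two tails,
\[|E(H)|=\sum_{\{x,y\}\in E(G)}\bigl|\{z:xy\to z\in E(H)\}\bigr|\le\sum_{\{x,y\}\in E(G)}\bigl|\{z:z\not\sim_G x,\ z\not\sim_G y\}\bigr|.\]
The right-hand side counts pairs (edge of $G$, common non-neighbour of its endpoints); grouping by the underlying triple, a $3$-set contributes exactly once if it induces a single edge of $G$ and nothing if it induces a path, a triangle, or no edge. Writing $p_1(G)$ for the number of $3$-subsets inducing exactly one edge, this gives $|E(H)|\le p_1(G)\le\max_G p_1(G)$, the maximum over all $n$-vertex graphs.

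The crux is the extremal lemma: over all graphs on $n$ vertices, $p_1(G)$ is maximized --- uniquely up to isomorphism --- by the disjoint union of two cliques of sizes $\lfloor n/2\rfloor$ and $\lceil n/2\rceil$, and the maximum value is $\lfloor n/2\rfloor\lceil n/2\rceil\frac{n-2}{2}$. Taking complements, this is the statement that $K_{\lfloor n/2\rfloor,\lceil n/2\rceil}$ uniquely maximizes the number of induced copies of $P_3$. I would attack it in two stages: show that an optimal $G$ can be taken to be a disjoint union of cliques (a \emph{cluster graph}), and then, using the closed form $p_1=\sum_i\binom{n_i}{2}(n-n_i)$ for cliques of sizes $n_1,\dots,n_k$, show by a merging-and-balancing computation that the sum is maximized exactly when there are two parts of nearly equal size. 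I expect the first stage --- reducing to a cluster graph while keeping track of equality to secure uniqueness --- to be the main obstacle, since the obvious single-edge switches (deleting a tail edge or adding a chord of an induced $P_3$) do not change $p_1$ monotonically; a more global argument (an exchange/compression on whole neighbourhoods, or the quadratic defect form $p_1=nm-\sum_v d(v)^2+3t_3$, with $m=|E(G)|$ and $t_3$ the number of triangles) appears necessary.

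Finally I would close the gap with the known construction and derive uniqueness of $H$. Partitioning $V$ into $A,B$ with $|A|=\lceil n/2\rceil$, $|B|=\lfloor n/2\rfloor$ and taking all edges $xy\to z$ with $\{x,y\}\subseteq A,\,z\in B$ or $\{x,y\}\subseteq B,\,z\in A$ yields tail graph $G=K_A\sqcup K_B$, fills every common-non-neighbour head slot, and places at most one edge on each triple, so $|E(H)|=p_1(G)=\max_G p_1$. For uniqueness, an extremal $H$ forces both that its tail graph is the unique $p_1$-maximizer (two balanced cliques) and that the head-count inequality is tight for every tail pair; together these pin $H$ down as this construction, which is a single isomorphism class for every $n$ because the two cliques are interchangeable.
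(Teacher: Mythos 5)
Your reduction is correct and cleanly stated: in the oriented model, $R_3$-freeness is exactly the condition that the head of every edge is a common non-neighbour of its tail pair in the tail graph $G$; each edge of $H$ occupies a distinct triple; and a triple can carry an edge of $H$ only if it induces exactly one edge of $G$ (namely the tail pair). Hence $|E(H)|\le p_1(G)$, and equality together with the structure of $G$ determines $H$. The genuine gap is that everything now rests on the extremal lemma that $p_1(G)$ is maximized, uniquely for every $n$, by two balanced cliques, and you do not prove it --- you explicitly flag the reduction to cluster graphs as the main obstacle and say a more global argument ``appears necessary'' without supplying one. This lemma is not a citable triviality: the identity $p_1=nm-\sum_v d(v)^2+3t_3$ has the triangle term working in the wrong direction, local switches are not monotone (as you note), and in fact the lemma is \emph{equivalent} to the theorem, not easier than it. Given any $G$, placing the edge $xy\to z$ on every triple $\{x,y,z\}$ inducing exactly the edge $xy$ produces an oriented $R_3$-free graph with exactly $p_1(G)$ edges, so $\max_G p_1(G)=\mathrm{ex}_o(n,R_3)$ and the uniqueness statements correspond as well. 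Deferring the lemma therefore defers the entire content of the proof.

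For comparison, the paper's argument supplies precisely the kind of global computation your sketch is missing, but phrased through link graphs rather than through $p_1$. With $U_x=N_G(x)$ and $C_x$ its complement, $R_3$-freeness forces every directed link graph to satisfy $D_x\subseteq U_x\times C_x$ and every tail link graph to satisfy $T_x\subseteq\binom{C_x}{2}$, giving the two complementary counts $2|E(H)|=\sum_x|D_x|\le\sum_x u_x(n-1-u_x)$ and $|E(H)|=\sum_x|T_x|\le\sum_x\binom{n-1-u_x}{2}$. The first alone settles even $n$; for odd $n$ the two bounds are played against each other to pin down the degree sequence of $G$ exactly, and uniqueness follows from showing that $y\in U_x$ is an equivalence relation with two balanced classes. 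If you want to salvage your route you would need to carry out an analogous two-sided computation (or a full proof of the induced-$P_3$ inducibility result with uniqueness for every $n$); as written, the proposal is a correct reformulation rather than a proof.
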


\begin{proof}
Let $H$ be an $R_3$-free graph on $n$ vertices. Consider the total link graph, $L_x$, for some $x \in V(H)$. If \[yz, z \rightarrow t \in E(L_x)\] or if \[y \rightarrow z, z \rightarrow t \in E(L_x)\] then $H$ is not $R_3$-free (See Figure~\ref{forLx}).

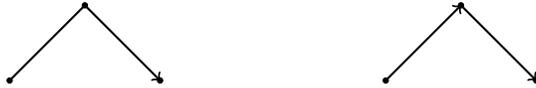
\begin{figure}
	\centering
	\begin{tikzpicture}
	\filldraw[black] (0,0) circle (1pt);
	\filldraw[black] (1,1) circle (1pt);
	\filldraw[black] (2,0) circle (1pt);
	\draw[thick] (0,0) -- (1,1);
	\draw[thick,->] (1,1) -- (2,0);
	
	\filldraw[black] (5,0) circle (1pt);
	\filldraw[black] (6,1) circle (1pt);
	\filldraw[black] (7,0) circle (1pt);
	\draw[thick,->] (5,0) -- (6,1);
	\draw[thick,->] (6,1) -- (7,0);
	\end{tikzpicture}
	\caption{Forbidden intersection types in $L_x$ for any vertex $x$ in an $R_3$-free graph.}
	\label{forLx}
\end{figure}

Let $U_x \subseteq V(L_x)$ be the set of vertices that appear as the tail vertex of some directed edge in $L_x$. Then no edges of $L_x$ can be contained entirely inside $U_x$ - it is an independent set with respect to both directed and undirected edges. Moreover, all undirected edges of $L_x$ must appear entirely within the complement, $C_x := V(L_x) \setminus U_x$. Hence, if we let $u_x = |U_x|$, then \[2|E(H)| = \sum_{x \in V(H)} |D_x| \leq \sum_{x \in V(H)} u_x(n-1-u_x).\]

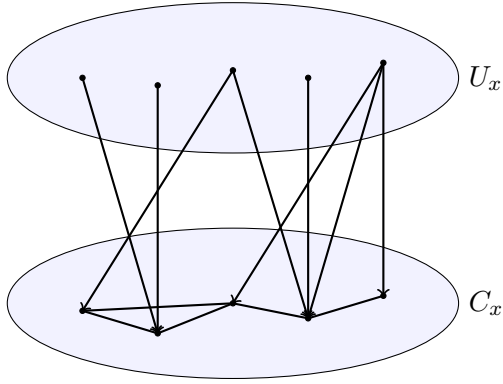
\begin{figure}
	\centering
	\begin{tikzpicture}
		\filldraw[color=black,fill=blue!5] (0,0) ellipse (3 and 1);
		\filldraw[color=black,fill=blue!5] (0,3) ellipse (3 and 1);
		
		\node [right] at (3,0) {$C_x$};
		\node [right] at (3,3) {$U_x$};
		
		\filldraw[black] (0,3.1) circle (1pt);
		\filldraw[black] (-1,2.9) circle (1pt);
		\filldraw[black] (1,3) circle (1pt);
		\filldraw[black] (2,3.2) circle (1pt);
		\filldraw[black] (-2,3) circle (1pt);
		
		\filldraw[black] (0,0) circle (1pt);
		\filldraw[black] (-1,-0.4) circle (1pt);
		\filldraw[black] (1,-0.2) circle (1pt);
		\filldraw[black] (2,0.1) circle (1pt);
		\filldraw[black] (-2,-0.1) circle (1pt);
		
		\draw[thick,->] (0,3.1)--(1,-0.2);
		\draw[thick,->] (0,3.1)--(-2,-0.1);
		\draw[thick,->] (1,3)--(1,-0.2);
		\draw[thick,->] (2,3.2)--(2,0.1);
		\draw[thick,->] (2,3.2)--(1,-0.2);
		\draw[thick,->] (2,3.2)--(0,0);
		\draw[thick,->] (-1,2.9)--(-1,-0.4);
		\draw[thick,->] (-2,3)--(-1,-0.4);
		
		\draw[thick] (-2,-0.1)--(-1,-0.4);
		\draw[thick] (-1,-0.4)--(0,0);
		\draw[thick] (0,0)--(1,-0.2);
		\draw[thick] (1,-0.2)--(2,0.1);
		\draw[thick] (-2,-0.1)--(0,0);
	\end{tikzpicture}
	\caption{The structure of $L_x$ for any $x$ in an $R_3$-free graph}
	\label{Lxstructure}
\end{figure}

Each term of this sum is maximized when $u_x \in \left\{\left\lfloor \frac{n-1}{2} \right\rfloor, \left\lceil \frac{n-1}{2} \right\rceil\right\}$. Therefore, the result is immediate if $n$ is even. The situation is slightly more complicated for odd $n$.

In this case, \[u_x(n-1-u_x) \leq \left(\frac{n-1}{2}\right)^2\] for each $x$. However, we need $u_x = \frac{n-1}{2}$ in order to attain this maximum value. This means that there are $\frac{n-1}{2}$ vertices in $C_x$, and so there are at most ${\frac{n-1}{2} \choose 2}$ edges in $T_x$. Therefore, if every $x \in V(H)$ gave $u_x = \frac{n-1}{2}$, then \[|E(H)| = \sum_{x \in V(H)} |T_x| < \frac{(n-2)(n-1)(n+1)}{8},\] which is given by the construction we want to show is optimal.

For each $x$ let $i_x \in \{0,\ldots,\frac{n-1}{2}\}$ be the integer such that \[u_x(n-1-u_x) = \left(\frac{n-1}{2} - i_x\right)\left(\frac{n-1}{2}+i_x\right).\]Then, \[|E(H)| \leq \frac{1}{2} \sum_{x \in V(H)} \left(\frac{n-1}{2} - i_x\right)\left(\frac{n-1}{2}+i_x\right)= \frac{n(n-1)^2}{8} - \frac{1}{2} \sum_{j = 0}^{\frac{n-1}{2}} k_j j^2\] where $k_j$ is the number of vertices $x \in V(H)$ for which $i_x=j$.

Since the construction gives $\frac{(n-2)(n-1)(n+1)}{8}$ for odd $n$, then we are only interested in beating this. So set \[\frac{(n-2)(n-1)(n+1)}{8} \leq \frac{n(n-1)^2}{8} - \frac{1}{2} \sum_{j = 0}^{\frac{n-1}{2}} k_j j^2.\] This gives \[\sum_{j = 0}^{\frac{n-1}{2}} k_j j^2 \leq \frac{n-1}{2}.\]

Since we can also find $|E(H)|$ by counting the number of undirected edges over the $L_x$, then we can upper bound the number of these by assuming $u_x = \frac{n-1}{2} - i_x$ for each $x$ since this increases the size of $C_x$. This gives \[|E(H)| \leq \sum_{x \in V(H)} {\frac{n-1}{2} + i_x \choose 2}= \frac{n^3 - 4n^2 + 3n}{8} + \frac{1}{2} \sum_{j = 0}^{\frac{n-1}{2}} j(n+j-2)k_j.\]

We can also set this greater than or equal to the known lower bound: \[\frac{(n-2)(n-1)(n+1)}{8} \leq \frac{n^3 - 4n^2 + 3n}{8} + \frac{1}{2} \sum_{j = 0}^{\frac{n-1}{2}} j(n+j-2)k_j\] to get \[\frac{(n-1)^2}{2} \leq \sum_{j = 0}^{\frac{n-1}{2}} k_j j^2 + (n-2) \sum_{j = 0}^{\frac{n-1}{2}} k_j j.\] Combining the inequalities gives \[0 \leq \sum_{j = 0}^{\frac{n-1}{2}} k_j (j - j^2).\] Since $j - j^2 < 0$ for any $j \geq 2$ and $j - j^2 = 0$ when $j=0,1$, then $k_j = 0$ for all $j \geq 2$.

Moreover, once all these are set to zero we get that \[k_1 \leq \frac{n-1}{2} \leq k_1.\] Therefore, $k_1 = \frac{n-1}{2}$ and so $k_0 = \frac{n+1}{2}$ since $\sum k_j = n$. This gives the desired upper bound.

Now we can show that the lower bound construction is the unique extremal example up to isomorphism. Let $H$ be an extremal example on $n$ vertices, and define a relation, $\sim$, on the vertices such that $x \sim y$ if and only if either $x = y$ or $y \in U_x$. This defines an equivalence relation on $V(H)$. Reflexivity and symmetry are both immediate. For transitivity note that the proof of the upper bound requires that every possible directed edge be taken from $U_x$ to $C_x$ for each $x \in V(H)$. Therefore, if we assume towards a contradiction that $y \in U_x$ and $z \in U_y$ but $z \not \in U_x$, then $z \in C_x$. So $xy \rightarrow z \in E(H)$ which means $z \in C_y$, a contradiction.

When $n$ is even there must be exactly two equivalence classes each of size $\frac{n}{2}$. Similarly, when $n$ is odd there must be two equivalence classes of sizes $\frac{n-1}{2}$ and $\frac{n+1}{2}$. Therefore, the lower bound construction must be unique.
\end{proof}

\subsection{The standard version}

\begin{theorem}
\label{TypeE}
For all $n \geq 6$, \[ex(n,R_3) = \left\lfloor \frac{n}{2} \right\rfloor \left\lceil \frac{n}{2} \right\rceil \frac{n-2}{2}.\]Moreover, there is one unique extremal $R_3$-free construction up to isomorphism for each $n$.
\end{theorem}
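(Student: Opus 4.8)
The plan is to rerun the link-graph argument of the oriented case, treating the extra freedom of the standard model (up to three edges on a triple) as a bounded correction that the counting can absorb. First I would record the lower bound: the partition construction from the oriented version is still $R_3$-free in the standard model, since if $ab\to c$ and $bc\to d$ were both present then $b,c$ would be tails of the second edge and hence in the same part, while $c$ is the head of the first edge and so lies in the part opposite to $b$, a contradiction. Thus $ex(n,R_3)\ge \left\lfloor \frac{n}{2}\right\rfloor\left\lceil\frac{n}{2}\right\rceil\frac{n-2}{2}$. For the upper bound I would again study, for each $x\in V(H)$, the total link graph $L_x$, where the two forbidden configurations of Figure~\ref{forLx} (an undirected edge feeding the tail of a directed edge, and two consecutive directed edges) still correspond exactly to copies of $R_3$ through $x$. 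Setting $U_x$ to be the vertices that are tails of directed edges of $L_x$ and $C_x=V(L_x)\setminus U_x$, the same reasoning shows every directed edge runs from $U_x$ to $C_x$, so $|D_x|\le u_x(n-1-u_x)$ holds verbatim, and the two global identities $2|E(H)|=\sum_x|D_x|$ and $|E(H)|=\sum_x|T_x|$ remain valid term-for-term even with repeated triples.

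The one genuinely new phenomenon is that an undirected edge of $T_x$ may now touch $U_x$. Here I would prove the key structural claim: an undirected edge $zt$ with $z\in U_x$ forces $t$ to be the \emph{unique} directed out-neighbour of $z$ in $D_x$, since any second out-neighbour $t'\neq t$ would complete a genuine (injective) $R_3$ together with $zt$, namely $tz\to x$ and $xz\to t'$ on the four distinct vertices $t,z,x,t'$. Hence such ``doubled'' pairs occur only at vertices of $U_x$ of out-degree exactly one and always sit on top of an existing directed edge $z\to t$. This yields the refined pair of bounds $|T_x|\le \binom{n-1-u_x}{2}+e_x$ and $|D_x|\le u_x(n-1-u_x)-e_x'(n-2-u_x)$, where $e_x$ counts the doubled undirected edges and $e_x'\ge e_x$ counts the out-degree-one tails: every extra undirected edge is paid for by a deficit in the directed count, because a tail pointing to only one vertex wastes its remaining $n-2-u_x$ potential directed edges.

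With these in hand I would reproduce the bookkeeping of the oriented proof. Writing $u_x=\frac{n-1}{2}-i_x$ for odd $n$ (the even case being handled analogously) and letting $k_j=|\{x:i_x=j\}|$, I would play both refined inequalities, together with the correction sums $\sum_x e_x$ and $\sum_x e_x'(n-2-u_x)$, against the known lower bound. The goal is to show, exactly as in the oriented argument, that the combination collapses to $0\le\sum_j k_j(j-j^2)$ up to these corrections, forcing $k_j=0$ for $j\ge 2$, the balanced split of the $u_x$, and, crucially, $e_x=0$ for every $x$. Once all doubled edges are excluded, $H$ is effectively oriented, and the equivalence-relation argument from the oriented proof then delivers both the exact value and the uniqueness of the extremal construction.

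The main obstacle will be making the correction terms exact rather than merely asymptotic: I must verify that the directed-count deficit $\frac12\sum_x e_x'(n-2-u_x)$ always dominates the undirected-count surplus $\sum_x e_x$ for every $n\ge 6$, and in particular control the awkward regime where some $u_x$ is close to $n-1$, so that $n-2-u_x$ is small and a single tail could in principle support many doubled edges. Showing that such lopsided link graphs are too sparse to threaten the bound, and squeezing the inequalities tightly enough to force $e_x=0$ at equality, is where the real work lies and where the precise threshold $n\ge 6$ should emerge.
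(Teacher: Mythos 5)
Your lower bound and your identification of the forbidden incidences in $L_x$ are fine, but there is a genuine gap in your case analysis of multiedges, and it sits exactly where the paper's proof does its main work. You assert that ``the same reasoning shows every directed edge runs from $U_x$ to $C_x$, so $|D_x|\le u_x(n-1-u_x)$ holds verbatim,'' and you then declare that the \emph{one} new phenomenon is an undirected edge touching $U_x$. This overlooks the pair of oppositely directed edges: $xy\to z$ and $xz\to y$ may both be present in a standard $R_3$-free graph, because the resulting configuration uses only the three vertices $x,y,z$ and hence is not an injective copy of $R_3$ (the pattern $y\to z$, $z\to t$ in $L_x$ is forbidden only when $t\neq y$, which is automatic in the oriented model but not in the standard one). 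Such a pair puts both $y$ and $z$ into $U_x$ with two directed edges lying entirely inside $U_x$, so neither the bipartite structure of $D_x$ nor the bound $u_x(n-1-u_x)$ holds verbatim, and your endgame conclusion that once all doubled \emph{undirected} edges are excluded the graph is effectively oriented does not follow---you never exclude these doubled directed pairs. The paper's proof is organized around precisely this point: it isolates a set $M_x$ of vertices on multiedges, classifies all three multiedge types (two opposite directed edges; one directed plus one undirected; one undirected plus two directed), shows the doubly-directed pairs are isolated in $L_x$ while the directed-plus-undirected pairs form stars, and from that structure derives that any link graph containing a multiedge has strictly fewer than $(\frac{n-1}{2})^2-1$ directed edges.

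Beyond the missing case, the quantitative half of your plan is not yet a proof. You propose to carry correction terms $e_x$ and $e_x'(n-2-u_x)$ through the $k_j$ bookkeeping of the oriented argument, and you candidly flag that verifying the deficit dominates the surplus ``is where the real work lies,'' including the lopsided regime where $u_x$ is near $n-1$; that is the part that must actually be carried out. The paper avoids this delicate balancing by proving the clean strict bound above for every link graph containing a multiedge, which settles even $n$ immediately, and then disposes of the odd-$n$ compensation scenario (many perfectly balanced multiedge-free link graphs making up the deficit) with a separate argument: the vertices whose directed link graphs are balanced complete bipartite graphs would have to split into at least two classes under the relation $x\sim y \iff y\in U_x$, and counting two classes $A,B$ together with their forced complements $C,D$ gives $a+c=b+d=\frac{n+1}{2}$ while $a+b+c+d\le n$, a contradiction. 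To repair your proposal you would need both the doubly-directed case and some substitute for this final counting step.
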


\begin{proof}
Let $H$ be an $R_3$-free graph on $n$ vertices. Let $x \in V(H)$, and call any pair of vertices in $L_x$ a multiedge if they contain more than one edge. Let $V(L_x) = U_x \cup C_x \cup M_x$ where $M_x$ is the set of vertices that are incident to multiedges (that is, the minimal subset of vertices that contains all multiedges) and $U_x$ and $C_x$ are defined on the rest of the vertices as in Theorem~\ref{exE}. The goal is to show that if $M_x$ is nonempty for any vertex $x$, then $H$ has strictly fewer than the number of edges in the unique oriented construction given in Theorem~\ref{exE}. Therefore, that construction must be the unique extremal $R_3$ example for the standard problem as well.

There are three possibilities for multiedges in $M_x$: two oppositely directed edges, one directed edge and one undirected edge, and one undirected edge with two oppositely directed edges. If $y,z \in M_x$ have two directed edges between them, then neither $y$ nor $z$ is incident to any other edge in $L_x$ since any incidence would create one of the two forbidden edge incidences of $L_x$ as discussed in the previous theorem.

If $y$ and $z$ have only one directed edge (assume it is $y \rightarrow z$) and one undirected edge between them, then $y$ cannot be incident to any more edges for the same reason as before, but $z$ can be incident to undirected edges as well as directed edges with $z$ at the head. This means that $z$ may be the vertex of intersection of a star of these types of multiedges within $M_x$, and between any two such stars, the vertices of intersection may have an undirected edge between them, but no directed.

\begin{figure}
	\centering
	\begin{tikzpicture}
		\filldraw[color=black,fill=blue!5] (1.5,-0.5) circle [radius=3];
		\node at (-1.5,2.5) {$M_x$};
		
		\filldraw[black] (0,0) circle (1pt);
		\filldraw[black] (1,0) circle (1pt);
		\filldraw[black] (0,-1) circle (1pt);
		\filldraw[black] (-0.707,-0.707) circle (1pt);
		\filldraw[black] (0,1) circle (1pt);
		\draw[thick, ->] (1,0) -- (0,0);
		\draw[thick, ->] (0,-1) -- (0,0);
		\draw[thick, ->] (-0.707,-0.707) -- (0,0);
		\draw[thick, ->] (0,1) -- (0,0);
		
		\filldraw[black] (2,2) circle (1pt);
		\filldraw[black] (1,2) circle (1pt);
		\draw[thick, ->] (1,2) -- (2,2);
		
		\filldraw[black] (2,1) circle (1pt);
		\filldraw[black] (2,0) circle (1pt);
		\filldraw[black] (3,0) circle (1pt);
		\draw[thick, ->] (3,0) -- (2,0);
		\draw[thick, ->] (2,1) -- (2,0);
		
		\filldraw[black] (0,-1.7) circle (1pt);
		\filldraw[black] (0,-2.7) circle (1pt);
		\draw[thick,<->] (0,-1.7) -- (0,-2.7);
		\filldraw[black] (1,-1.7) circle (1pt);
		\filldraw[black] (1,-2.7) circle (1pt);
		\draw[thick,<->] (1,-1.7) -- (1,-2.7);
		\filldraw[black] (2,-1.7) circle (1pt);
		\filldraw[black] (2,-2.7) circle (1pt);
		\draw[thick,<->] (3,-1.7) -- (3,-2.7);
		\filldraw[black] (3,-1.7) circle (1pt);
		\filldraw[black] (3,-2.7) circle (1pt);
		\draw[thick,<->] (2,-1.7) -- (2,-2.7);
	\end{tikzpicture}
	\caption{Example structure of $M_x$ with 3 single directed edge stars and 4 double directed pairs}
	\label{Mx}
\end{figure}
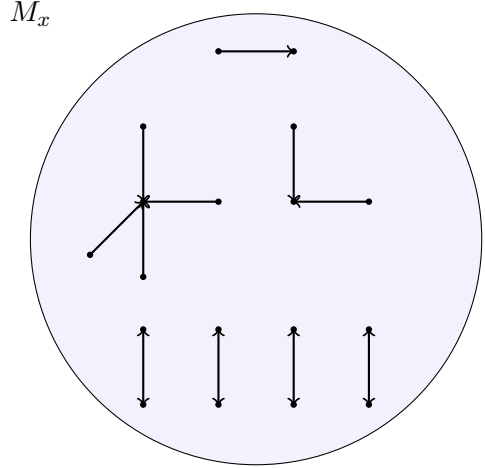

Therefore, the structure of the internal directed edges of $M_x$ looks like Figure~\ref{Mx} with only the vertices of intersection of the single directed edge stars able to accept more edges from the rest of $L_x$. Directed edges from the rest of the graph to $M_x$ must originate in $U_x$. Therefore, if $M_x$ consists of $d$ double directed edge pairs of vertices and $k$ single directed stars with the $i$th star containing $s_i$ vertices, then the total number of directed edges incident to vertices of $M_x$ is at most \[2d + \sum_{i=1}^k (s_i - 1 + u)\]where $u$ is the number of vertices in $U_x$.

If we assume that $M_x$ is nonempty, then $|M_x| = m \geq 2$. The number of directed edges incident to or inside of $M_x$ is at most $m+k(u-1)$. Therefore, for $u \geq 2$, the number of directed edges incident to vertices of $M_x$ is maximized when the number of single directed edge stars is maximized. This is $\left\lfloor \frac{m}{2} \right\rfloor$ stars. Therefore, there are at most \[ \frac{m}{2} (u+1) \]directed edges incident to vertices of $M_x$. Thus, if $|C_x|=c$, then $L_x$ can have at most $uc + \frac{m}{2} (u+1)$ directed edges. And since $u \geq 2$, then \[uc + \frac{m}{2}(u+1) < u(c+m).\]

So $L_x$ has strictly less directed edges than a complete bipartite graph on the same number of vertices would. In Theorem~\ref{exE} every $L_x$ needed to be a complete bipartite graph in terms of the directed edges in order for the maximum number of edges to be obtained, and only in the case of odd $n$ could some of these bipartitions be less than equal or almost equal. In those cases the parts could only have $\frac{n-1}{2} - 1$ and $\frac{n-1}{2} + 1$ vertices. Therefore, the only way that $u(c+m)$ could have more than this is if $u = c+m$ and so $u = \frac{n-1}{2}$.

We assume that $m \geq 2$ and $u \geq 2$, but if both are equal to 2, then $c = u-m =0$ and $n=4$, a contradiction since $n$ is odd. Therefore, one of them must be strictly greater. So \[uc + \frac{m}{2}(u+1) < (u-1)(u+1) = \left(\frac{n-1}{2} - 1\right)\left(\frac{n-1}{2} + 1\right).\] This leaves only the cases where $u=0$ and $u=1$ which are trivial.

So every link graph of $H$ that contains a multiedge has strictly less than $(\frac{n-1}{2})^2-1$ directed edges. This is enough to prove that an extremal $R_3$-free graph on an even number of vertices must be oriented. However, if there are an odd number of vertices it is possible that there could be enough directed link graphs with the maximum $\left(\frac{n-1}{2}\right)^2$ directed edges to make up the deficit for the directed link graphs with strictly less than $\left(\frac{(n-3)(n+1)}{4}\right)$ due to multiedges.

In this case there would need to be at least $\frac{n+3}{2}$ vertices with directed link graphs that are complete bipartite graphs with parts of size $\frac{n-1}{2}$ each. Let $S$ be the set of these vertices. For any $x,y \in S$ define the relation $x \sim y$ if and only if $y \in U_x$. As in the proof of Theorem~\ref{exE}, this turns out to be an equivalence relation. By the definition of $S$ one equivalence class can hold at most $\frac{n+1}{2}$ vertices. So there must be two nonempty classes. Let these classes be $A$ and $B$ with $a$ and $b$ vertices respectively. Let $C$ be the set of vertices that are in every $U_x$ for $x \in A$ but not in $A$ itself. Similarly, let $D$ be the set of vertices that are in every $U_x$ for $x \in B$ but are not in $B$ itself. The sets $A$, $B$, $C$, and $D$ are disjoint. Let $c=|C|$ and $d=|D|$. Then \[a+c = b+d = \frac{n+1}{2},\] but $a+b+c+d \leq n$, a contradiction. This is enough to show the result.
\end{proof}

\section{The Escher graph $E$}

\begin{figure}
	\centering
	\begin{tikzpicture}
		\filldraw[black] (0,0) circle (1pt);
		\filldraw[black] (0,2) circle (1pt);
		\filldraw[black] (2,1) circle (1pt);
		\filldraw[black] (2,-1) circle (1pt);
		
		\draw[thick] (0,0) -- (0,2);
		\draw[thick] (2,-1) -- (2,1);
		\draw[thick,->] (0,1) -- (2,1);
		\draw[thick,->] (2,0) -- (0,0);
	\end{tikzpicture}
	\caption{$E$}
	\label{Gpic}
\end{figure}

In this section, we will prove the following result on the maximum number of edges of a $E$-free.

\begin{theorem}
\label{mexG}
For all $n$, \[ex(n,E) = {n \choose 3} + 2\]and there are exactly two extremal construction up to isomorphism for each $n \geq 4$.
\end{theorem}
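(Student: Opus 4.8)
The plan is to prove matching lower and upper bounds and then classify the extremal graphs.

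For the lower bound I would fix a linear order $v_1 < v_2 < \cdots < v_n$ and first build the ``maximum-head'' graph $H_0$ containing, for every triple, the single edge whose head is its largest vertex. Since an Escher copy $ab \rightarrow c$, $cd \rightarrow b$ needs $c$ to be a head while also (as a tail of the second edge pointing at the head $b$) forcing $b > c$, and symmetrically $c > b$, the order makes this impossible; so $H_0$ is $E$-free with exactly $\sum_{z}\binom{z-1}{2} = \binom{n}{3}$ edges. I would then add the two ``reverse'' edges $v_1v_3 \rightarrow v_2$ and $v_2v_3 \rightarrow v_1$ on the bottom triple $\{v_1,v_2,v_3\}$. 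A short check shows every potential Escher copy these create uses one of them together with $v_1v_2\rightarrow v_3$ (the only other edges with heads in $\{v_1,v_2\}$ are these new ones, and $v_3$ is a head only in $v_1v_2 \rightarrow v_3$), and in each case the two ``private'' tails coincide (both equal $v_3$), so no copy on four distinct vertices appears. This gives $\binom{n}{3}+2$ edges; I expect the second extremal graph to surface in the equality analysis rather than as an obviously different construction, so I would defer it to the uniqueness step.

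For the upper bound I would count by triples: writing $e(S)$ for the number of edges inside a triple $S$, we have $|E(H)| = \sum_S e(S) = \binom{n}{3} + \sum_S (e(S)-1)$, so it suffices to bound the net excess $\sum_S(e(S)-1) = N_2 + 2N_3 - N_0 \le 2$, where $N_i$ counts triples with $i$ edges. The engine is a reformulation of $E$-freeness on the digraph $G$ on $V(H)$ whose arc $u \rightarrow v$ records that $u$ is a tail of some edge with head $v$: an Escher copy is precisely a $2$-cycle $u \leftrightarrow v$ of $G$ whose two witnessing partner-tails can be chosen distinct. Hence $E$-freeness forces every $2$-cycle (a ``digon'') to satisfy a singleton condition: the only edge with tail $u$, head $v$ and the only edge with tail $v$, head $u$ share a single common partner $w$, so both lie in one triple $\{u,v,w\}$. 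Consequently a $2$-edge triple carries exactly one digon and a $3$-edge triple exactly three, each witnessed inside the triple itself, which yields a clean correspondence between excess and digons.

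The heart of the matter, and the step I expect to be the main obstacle, is converting the singleton condition into the sharp inequality $N_2 + 2N_3 - N_0 \le 2$. The leverage is that a digon $\{u,v\}$ with witness $w$ annihilates nearby structure: for every $x \notin \{u,v,w\}$ the edges $ux \rightarrow v$ and $vx \rightarrow u$ are both forbidden, so each triple $\{u,v,x\}$ holds at most the edge $uv \rightarrow x$; moreover the edge $vw \rightarrow u$ forbids every $ux \rightarrow w$ and, symmetrically, $uw \rightarrow v$ forbids every $vx \rightarrow w$, so the witness $w$ is itself constrained as a head. I would use these forced absences to argue that any ``second'' source of excess must generate enough empty triples to cancel it, so the total excess cannot exceed the value $2$ contributed by one complete triple. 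Finally, for uniqueness I would run the equality case of this accounting: equality pins all the excess to a single spot and forces the remaining orientation of $G$ to be transitive, after which a direct check of the admissible completions leaves exactly two graphs up to isomorphism for every $n \ge 4$.
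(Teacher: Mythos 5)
Your lower bound via $H_1$ is fine, and your ``digon'' reformulation is sound: it is essentially the paper's Lemma~\ref{observation} (if $d_x(y),d_y(x)>0$ then both equal $1$), and the local forced absences you list are all correct consequences of $E$-freeness. But the proposal has a genuine gap exactly where you flag it: the inequality $N_2+2N_3-N_0\le 2$ is asserted, not proved. Saying that ``any second source of excess must generate enough empty triples to cancel it'' is the entire difficulty. A single $2$-edge triple $\{u,v,w\}$ forbids certain edges on the triples $\{u,v,x\}$ but does not force any of them to be empty (each may still carry $uv\rightarrow x$, contributing excess $0$), so one digon costs nothing; the cancellation only appears when you analyze how two or more digons, or a digon and a $3$-edge triple, interact globally through the chains of forced orientations they impose on common triples. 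That interaction analysis is the bulk of the paper's proof: it first normalizes $H$ by an add-and-remove procedure (completing tail link graphs of vertices with at least three tail edges, deleting the reciprocal edges, and verifying via an induction on independent edges that the edge count never drops and no copy of $E$ appears), which confines all multi-edge triples to a set $T'$ of small-tail-degree vertices, and then runs a five-way case analysis on $|T'|$. Nothing in your sketch substitutes for that work.

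A second, related problem: your description of the equality case (``equality pins all the excess to a single spot and forces the remaining orientation to be transitive'') does not match one of the two actual extremal graphs. The paper's second construction $H_2$ has $N_3=0$, $N_2=3$ (three $2$-edge triples through a common vertex) and $N_0=1$ (one empty triple), so its excess of $2$ is distributed as $3-1$ rather than concentrated on one complete triple. Any accounting scheme you devise must admit this configuration at equality, which means the claim ``the total excess cannot exceed the value $2$ contributed by one complete triple'' is not the right invariant, and the uniqueness step cannot simply force transitivity outside a single triple. Until the counting argument is carried out in a form that both proves the bound and recovers $H_2$ as an equality case, the upper bound and the classification remain unproved.
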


But first we will prove the easier oriented version of the problem. This result will be needed to prove Theorem~\ref{mexG}.

\subsection{The oriented version}

\begin{theorem}
\label{exG}
For all $n$, \[ex_o(n,E) = {n \choose 3}\]and there is exactly one extremal construction up to isomorphism.
\end{theorem}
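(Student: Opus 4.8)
The plan is to pin the answer against the trivial ceiling. Since $G$ is oriented, each of the ${n \choose 3}$ triples carries at most one edge, so $\text{ex}_o(n,E) \leq {n \choose 3}$ with no work; everything is in exhibiting a construction that attains this and showing it is forced. For the construction I would fix a linear order on $V$ and orient every triple so that its head is the \emph{largest} of its three vertices. To verify this is $E$-free, observe that a copy of $E$ is exactly a pair $\{b,c\}$ carrying two edges on distinct triples $\{b,c,w_1\}$ and $\{b,c,w_2\}$ whose heads are $c$ and $b$ respectively (matching $ab \rightarrow c$ and $cd \rightarrow b$ with $a = w_1$, $d = w_2$). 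In the max-orientation the smaller of $b,c$ can never be a head of any triple through $\{b,c\}$, so the two heads $b$ and $c$ cannot both occur; hence no copy of $E$. This already gives $\text{ex}_o(n,E) = {n \choose 3}$.

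The substance is uniqueness, so suppose $G$ is $E$-free with all ${n \choose 3}$ edges present, meaning every triple has exactly one head. The key device I would introduce is a relation on $V$: declare $b \prec c$ iff $G$ contains an edge $bw \rightarrow c$ for some $w$, i.e.\ $c$ is the head of some edge whose tail set contains $b$. The first step is to show $E$-freeness is equivalent to antisymmetry of $\prec$: if $b \prec c$ and $c \prec b$ then there are edges $bw \rightarrow c$ and $cw' \rightarrow b$; the oriented assumption forbids two edges on one triple, which forces $w \neq w'$ and thus produces an honest copy of $E$ on the four distinct vertices $b,c,w,w'$. The second step is transitivity: given $a \prec b$ and $b \prec c$, I would examine the head of the triple $\{a,b,c\}$ and rule out heads $a$ and $b$ using antisymmetry (each would create a back-edge contradicting one of the hypotheses), leaving head $c$, which yields $a \prec c$. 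So $\prec$ is a strict partial order.

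The final and conceptually central step is to observe that the head of \emph{every} triple is $\prec$-above the other two: if $xy \rightarrow z$ is an edge then by definition $x \prec z$ and $y \prec z$, so $z$ is the unique $\prec$-maximum of $\{x,y,z\}$. I would then pass to an arbitrary linear extension $<$ of the partial order $\prec$ (which exists on a finite set); since the head of each triple is $\prec$-maximal, it is also the $<$-maximum of that triple, so $G$ is precisely the max-orientation with respect to $<$. As any two max-orientations on $n$ vertices differ only by a relabeling of the order, $G$ is isomorphic to the construction, giving exactly one extremal graph up to isomorphism.

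I expect the main obstacle to be the temptation to prove $\prec$ is a \emph{total} order, which is false: the two $\prec$-minimal vertices remain incomparable (every triple through them has its head at the third vertex), so there is no canonical order to recover. The resolution, and the crux of the argument, is recognizing that totality is unnecessary—because each triple's head is the partial-order maximum of its three vertices, it coincides with the maximum in \emph{every} linear extension, so an arbitrary extension already reproduces the construction. The only other point requiring care is the distinctness of the four vertices in the antisymmetry step, which is exactly where the oriented (at most one edge per triple) hypothesis is used.
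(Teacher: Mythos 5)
Your proof is correct and follows essentially the same route as the paper: the trivial $\binom{n}{3}$ ceiling, the max-orientation of a linear order as the construction, and the relation $x \prec y$ (``$y$ heads an edge with $x$ in its tail'') to force uniqueness. In fact your writeup is more complete than the paper's, which merely asserts that $\prec$ is an ``almost linear'' partial order and stops; your observation that antisymmetry plus transitivity suffice, because every triple's head is the $\prec$-maximum and hence the maximum in any linear extension, cleanly closes the gap the paper leaves implicit.
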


\begin{proof}
The upper bound here is trivial so we need only come up with an $E$-free construction that uses ${n \choose 3}$ edges. Let $H$ be the directed hypergraph defined on vertex set $V(H) = [n]$ and edge set, \[E(H) = \left \{ ab \rightarrow c : a <b<c \right \}.\] That is take some linear ordering on the $n$ vertices and for each triple direct the edge to the largest vertex. Then every triple has an edge and $H$ contains no copy of $E$.

Now we will show that this construction is unique. Let $H$ be an $E$-free graph on $n$ vertices and ${n \choose 3}$ edges. Define a relation on the vertices, $\prec$, where $x \prec y$ if and only if there exists an edge in $E(H)$ with $x$ in the tail and $y$ as the head vertex. Then $\prec$ is a partial ordering of the vertices that is almost linear in that every pair of vertices are comparable except for the two smallest elements (see Figure ~\ref{almostlin}).
\end{proof}

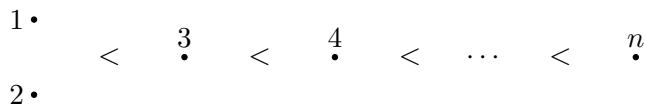
\begin{figure}
	\centering
	\begin{tikzpicture}
		\filldraw[black] (0,0.5) circle (1pt);
		\filldraw[black] (0,1.5) circle (1pt);
		\filldraw[black] (2,1) circle (1pt);
		\filldraw[black] (4,1) circle (1pt);
		\filldraw[black] (8,1) circle (1pt);
		
		\node at (1,1) {$<$};
		\node at (3,1) {$<$};
		\node at (5,1) {$<$};
		\node at (6,1) {$\cdots$};
		\node at (7,1) {$<$};
		
		\node [left] at (0,1.5) {$1$};
		\node [left] at (0,0.5) {$2$};
		\node [above] at (2,1) {$3$};
		\node [above] at (4,1) {$4$};
		\node [above] at (8,1) {$n$};
	\end{tikzpicture}
	\caption{An ``almost" linear ordering on the vertices of an $E$-free directed hypergraph.}
	\label{almostlin}
\end{figure}

We now shift our attention to the the standard version of the problem where a triple of vertices can have more than one edge. Here, both of the lower bound constructions are similar to the unique extremal construction in the oriented version.

\subsection{Two lower bound constructions for $\text{ex}(n,E)$}

The first construction is the same as the extremal construction in the oriented case but with two additional edges placed on the ``smallest" triple. That is, let $H_1 = ([n],E_1)$ where \[E_1 =  \left \{ ab \rightarrow c : a <b<c \right \} \cup \{13 \rightarrow 2, 23 \rightarrow 1\}.\] See Figure~\ref{mexG1}.

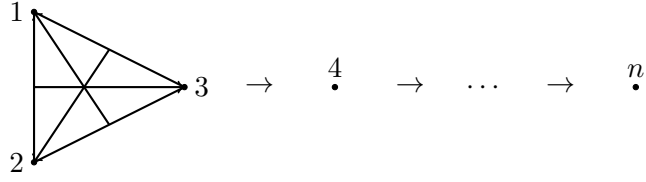
\begin{figure}
	\centering
	\begin{tikzpicture}
		\filldraw[black] (0,0) circle (1pt);
		\filldraw[black] (0,2) circle (1pt);
		\filldraw[black] (2,1) circle (1pt);
		\filldraw[black] (4,1) circle (1pt);
		\filldraw[black] (8,1) circle (1pt);
		
		\draw[thick] (0,0) -- (0,2);
		\draw[thick] (0,0) -- (2,1);
		\draw[thick] (0,2) -- (2,1);
		\draw[thick,->] (0,1) -- (2,1);
		\draw[thick,->] (1,0.5) -- (0,2);
		\draw[thick,->] (1,1.5) -- (0,0);
		
		\node at (3,1) {$\rightarrow$};
		\node at (5,1) {$\rightarrow$};
		\node at (6,1) {$\cdots$};
		\node at (7,1) {$\rightarrow$};
		
		\node [left] at (0,2) {$1$};
		\node [left] at (0,0) {$2$};
		\node [right] at (2,1) {$3$};
		\node [above] at (4,1) {$4$};
		\node [above] at (8,1) {$n$};
	\end{tikzpicture}
	\caption{The first extremal construction, $H_1$, for an $E$-free directed hypergraph on $n$ vertices.}
	\label{mexG1}
\end{figure}

Moreover, it is important to note that if an $E$-free graph with ${n \choose 3} + 2$ edges has at least one edge on every vertex triple, then it must be isomorphic to $H_1$. This is because we can remove two edges to get an $E$-free subgraph where each triple has exactly one edge. Therefore, this  must be the unique extremal construction established in Theorem~\ref{exG}. The only way to add two edges to this construction and avoid creating an Escher graph is to add the additional edges to the smallest triple under the ordering.

The second construction is also based on the oriented extremal construction. Let $H_2=([n],E_2)$ where \[E_2 =  \left( E_1 \setminus \{23 \rightarrow 4, 23 \rightarrow 1\} \right) \cup \{14 \rightarrow 2, 14 \rightarrow 3\}.\] See Figure~\ref{mexG2}.

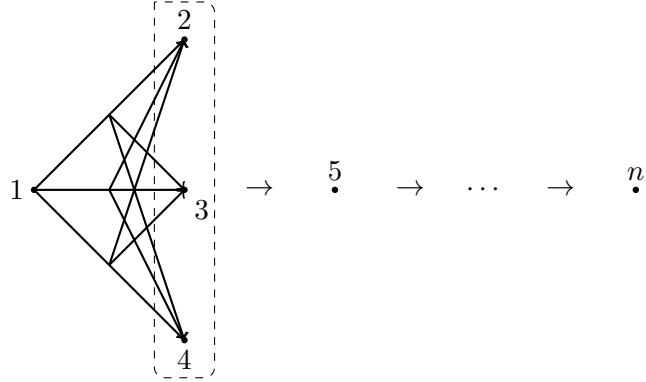
\begin{figure}
	\centering
	\begin{tikzpicture}
		\filldraw[black] (0,0) circle (1pt);
		
		\filldraw[black] (2,0) circle (1pt);
		\filldraw[black] (2,2) circle (1pt);
		\filldraw[black] (2,-2) circle (1pt);
		
		\filldraw[black] (4,0) circle (1pt);
		\filldraw[black] (8,0) circle (1pt);
		
		\draw[thick] (0,0) -- (2,0);
		\draw[thick] (0,0) -- (2,2);
		\draw[thick] (0,0) -- (2,-2);
		\draw[thick,->] (1,1) -- (2,0);
		\draw[thick,->] (1,1) -- (2,-2);
		\draw[thick,->] (1,0) -- (2,2);
		\draw[thick,->] (1,0) -- (2,-2);
		\draw[thick,->] (1,-1) -- (2,2);
		\draw[thick,->] (1,-1) -- (2,0);
		
		\node at (3,0) {$\rightarrow$};
		\node at (5,0) {$\rightarrow$};
		\node at (6,0) {$\cdots$};
		\node at (7,0) {$\rightarrow$};
		
		\node [left] at (0,0) {$1$};
		
		\node [above] at (2,2) {$2$};
		\node [below right] at (2,0) {$3$};
		\node [below] at (2,-2) {$4$};
		
		\node [above] at (4,0) {$5$};
		\node [above] at (8,0) {$n$};
		
		\draw[rounded corners, dashed] (1.6,2.5) -- (1.6,-2.5) -- (2.4,-2.5) -- (2.4,2.5) -- (1.6,2.5);
	\end{tikzpicture}
	\caption{The second extremal construction, $H_2$, for an $E$-free graph on $n$ vertices.}
	\label{mexG2}
\end{figure}

For the rest of this section we will show that any $E$-free graph is either isomorphic to one of these two constructions or has fewer than ${n \choose 3} + 2$ edges. Roughly speaking, the strategy is to take any $E$-free graph and show that we can add and remove edges to it so that we preserve $E$-freeness, remove most multiple edges from triples that had more than one, and never decrease the overall number of edges.

\subsection{Add and Remove Edges}

Let $H$ be an $E$-free graph and represent its vertices as the disjoint union of three sets: \[V(H) = D \cup R \cup T\] where $D$ (for `Done') is the set of all vertices that have complete graphs on three or more vertices as tail link graphs, $R$ (for `Ready to change') is the set of vertices not in $D$ that have at least three edges in their tail link graphs, and $T$ is the set of all other vertices (those with `Two or fewer edges in their tail link graphs').

The plan is now to remove and add edges in order make a new graph $H'$ which is also $E$-free, has at least as many edges as $H$, and whose vertices make a disjoint union, \[V(H') = D' \cup T'\] where $D'$ and $T'$ are defined exactly the same as $D$ and $T$ except in terms of the vertices of $H'$.

That is, for each vertex $x \in R$, we will add all possible edges to complete $T_x$. This moves $x$ from $R$ to $D$. The edges removed will be all those that pointed from $x$ to a vertex that points to $x$. This will destroy triples with more than one edge as we go. The following observation will ensure that this procedure only ever moves vertices from $R$ to $D$, from $R$ to $T$, from $R$ to $R$, and from $T$ to $T$. Since each step moves one vertex from $R$ to $D$ and ends when $R$ is empty, then the procedure is finite. Here is the observation:

\begin{lemma}
\label{observation}
Let $H$ be an $E$-free graph, and let $x,y \in V(H)$. If $d_x(y),d_y(x) > 0$, then $d_x(y) = d_y(x) = 1$. In other words, for any two vertices, $x$ and $y$, if $d_y(x) \geq 2$, then $d_x(y) = 0$.
\end{lemma}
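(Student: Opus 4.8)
The plan is to argue by contradiction, manufacturing a copy of the Escher graph $E$ out of the witnesses guaranteed by the degree hypotheses. Since the asserted equivalence ``if $d_x(y),d_y(x) > 0$ then both equal $1$'' is symmetric in $x$ and $y$, I would instead prove the contrapositive form stated at the end of the lemma: assuming $d_y(x) \geq 2$, I would show $d_x(y) = 0$; the stronger-looking first sentence then follows by applying this with the roles of $x$ and $y$ swapped.

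Concretely, suppose toward a contradiction that $d_y(x) \geq 2$ while $d_x(y) > 0$. Unwinding the definition of the tail link graph, $d_y(x) \geq 2$ means there are two distinct vertices $u_1, u_2$, each different from $x$ and $y$, with $x u_1 \rightarrow y, \, x u_2 \rightarrow y \in E(H)$, and $d_x(y) > 0$ supplies a vertex $v \neq x,y$ with $y v \rightarrow x \in E(H)$.

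The key step is to select witnesses whose four underlying vertices are distinct. Since $u_1 \neq u_2$, at most one of them can equal $v$, so at least one, say $u_1$, satisfies $u_1 \neq v$. Then $x, y, u_1, v$ are four pairwise distinct vertices (distinctness of the other pairs is automatic), and the two edges $x u_1 \rightarrow y$ and $y v \rightarrow x$ realize an injective homomorphism from $E$ into $H$: matching the Escher template $\{ab \rightarrow c, \, cd \rightarrow b\}$ via $b \mapsto y$, $c \mapsto x$, $a \mapsto u_1$, $d \mapsto v$ sends $ab \rightarrow c$ to $u_1 y \rightarrow x$ and $cd \rightarrow b$ to $x v \rightarrow y$, both edges of $H$. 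This contradicts $E$-freeness, so $d_x(y) = 0$.

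There is essentially no serious obstacle here; the only point requiring care is the distinctness of the four vertices, which is precisely why the hypothesis must supply two tail-partners on one side — the pigeonhole then forces one of them to avoid the single partner $v$ on the other side. I would be careful to confirm that the matching to the Escher template is valid, namely that the two shared vertices $b,c$ correspond to $y,x$ (each serving once as a tail and once as a head, exactly as $b$ and $c$ do in $E$) while the two private tails $a,d$ correspond to $u_1,v$.
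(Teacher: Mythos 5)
Your argument is in substance the same as the paper's: assume both degrees are positive with one at least $2$, extract the two tail-partners $u_1,u_2$ of $x$ in $T_y$ and the single partner $v$ of $y$ in $T_x$, use pigeonhole to pick $u_1\neq v$ so that all four vertices are distinct, and observe that the two edges form a copy of $E$; this is exactly how the paper proceeds (with the roles of $x$ and $y$ interchanged), and the distinctness bookkeeping is handled correctly. The one slip is in your explicit matching to the template $\{ab\rightarrow c,\ cd\rightarrow b\}$: with $a\mapsto u_1$, $b\mapsto y$, $c\mapsto x$, $d\mapsto v$ the template edges go to $u_1y\rightarrow x$ and $xv\rightarrow y$, neither of which is among your witnesses $xu_1\rightarrow y$ and $yv\rightarrow x$; the shared vertices should be assigned the other way, $b\mapsto x$ and $c\mapsto y$ (keeping $a\mapsto u_1$, $d\mapsto v$), which sends $ab\rightarrow c$ to $u_1x\rightarrow y=xu_1\rightarrow y$ and $cd\rightarrow b$ to $yv\rightarrow x$, as required.
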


\begin{proof}
Suppose not. Let $d_x(y),d_y(x) > 0$ and suppose $d_x(y) \geq 2$. Then there exist two distinct vertices, $a$ and $b$ such that \[ay \rightarrow x, by \rightarrow x \in E(H).\] There also exists a vertex $c$ such that $xc \rightarrow y \in E(H)$. Since $c$ must be distinct from either $a$ or $b$ if not both, then this yields an Escher graph.
\end{proof}

Now, let us make the procedure slightly more formal: While there exist vertices in $R$, pick one, $x \in R$, and for each pair $a,b \in V(T_x)$, add the edge $ab \rightarrow x$ to $E(H)$ if it is not already an edge. Then, for each $a \in V(T_x)$, remove all edges of $E(H)$ of the form $xs \rightarrow a$ for any third vertex $s$.

Since there were at least three edges in $T_x$, then the added edges will move $x$ from $R$ to $D$. The removed edges, if any, will only affect vertices in $R$ or in $T$ since if $xs$ is removed from $T_a$, then this implies that $a \in T_x$ and that $x \in T_a$ and so both had degree one in the other's tail link graph. Hence, $a \not \in D$. Moreover, an affected vertex in $R$ will either stay in $R$ or move to $T$ while an affected vertex in $T$ will stay in $T$ since it is only losing edges from its tail link graph.

Moreover, at the end of this process $D'$ will contain no triple of vertices with more than one edge. Therefore, the only such triples of vertices of $H'$ will be entirely in $T'$ or will consist of vertices from both $T'$ and $D'$. We will show later that there cannot be too many of these triples. First, we need to show that after each step of this procedure, no Escher graph is created and at least as many edges are added to the graph as removed.

\subsection{No copy of $E$ is created and the number of edges can only increase}

Fix a particular vertex $x \in R$ to move to $D$. Add and remove all of the designated edges. Suppose that we have created an Escher graph. Since the only edges added point to $x$, then the configuration must be of the form, $ab \rightarrow x, xc \rightarrow a$ for some distinct vertices, $a$, $b$, and $c$. Therefore, $a \in V(T_x)$ and so $xc \rightarrow a$ would have been removed in the process.

Now we will show that at least as many edges have been added to $H$ as removed by induction on the number of independent edges in $T_x$. Start by assuming there are 0 independent edges in $T_x$ and assume that there are $k$ vertices in $T_x$ that have degree one. Then at most $k$ edges will be removed. If $k=0$, then no edges are removed and there is a strict increase in the number of edges.

If $k=1$, then let $y_1$ be the degree one vertex and let $y_2$ be the vertex it is incident to. Since $d_x(y_2) \neq 1$ and $d_x(y_2) \geq 1$, then $d_x(y_2) \geq 2$. So there exists a third vertex, $y_3$, and similarly, $d_x(y_3) \geq 2$ but $y_2$ is not adjacent to $y_1$. Hence, there exists a fourth vertex, $y_4$. So at most one edge is removed and at least two edges are added, $y_1y_3 \rightarrow x$ and $y_1y_4 \rightarrow x$. Therefore, there is a strict increase in the number of edges.

If $k = 2$, then the fact that $T_x$ has at least three edges means that there must be at least two additional vertices in $T_x$. Hence, at most two edges are removed but at least three are added. If $k \geq 3$, then at most $k$ are removed but ${k \choose 2}$ are added which nets \[{k \choose 2} - k = \frac{k(k-3)}{2} \geq 0\] edges added.

Now, for the induction step, assume that $T_x$ has $m>0$ independent edges and that the process on a $T_x$ with $m-1$ independent edges adds just as many edges as it removes. Let $yz$ be an independent edge in $T_x$ and let $A$ be the set of vertices of $T_x$ that are not $y$ or $z$. Since $T_x$ has at least three edges, then $A$ contains at least three vertices. Therefore, the number of added edges is at least 6 between $A$ and $\{y,z\}$. The number of edges removed from $T_y$ and $T_z$ together is at most 2. By assumption, the number of edges removed from the other tail link graphs of vertices in $A$ is offset by the number of edges added inside $A$. Therefore, there is a strict increase in the number of edges.

To summarize, we have shown that $H'$ is an $E$-free graph such that \[|E(H)| \leq |E(H')|\] and \[V(H') = D' \cup T'\] such that any triple of vertices of $H'$ with more than one edge must intersect the set $T'$. We will now consider what is happening in $T'$ by cases.

\subsection{Case 1: $|T'| \geq 5$}

Let $T' = \{x_1,x_2,\ldots,x_t\}$ for $t \geq 5$. For each $x_i$ remove all edges of $H'$ that have $x_i$ as a head. By the definition of $T'$ this will remove at most $2t$ edges from $H'$.

Next, add all edges to $T'$ that follow the index ordering. That is, for each triple $\{x_i,x_j,x_k\}$ add the edge that points to the largest index, $x_ix_j \rightarrow x_k$ where $i<j<k$. This will add ${t \choose 3}$ edges. The new graph has \[{t \choose 3} - 2t \geq 0\] more edges than $H'$. Moreover, it is $E$-free and oriented. Therefore, $|E(H)| < {n \choose 3}$.

\subsection{Case 2: $|T'| \leq 4$ and there exists an $x \in T'$ such that $T_x$ is two independent edges}

Assume that some $x \in T'$ has a tail link graph $T_x$ such that $ab, cd \in E(T_x)$ for four distinct vertices, $\{a,b,c,d\}$. If \[d_a(x) = d_b(x) = d_c(x) = d_d(x) = 1,\] then $a,b,c,d,x \in T'$, a contradiction of the assumption that $|T'| \leq 4$.

Therefore, we can add the edges \[ac \rightarrow x, ad \rightarrow x, bc \rightarrow x, bd \rightarrow x\] and remove any edges that point to a vertex from $\{a,b,c,d\}$ with $x$ in the tail set. Because $x$ has zero degree in at least one of those tail link graphs, then we have removed at most three edges and added four, a strict increase. We have also not created any triples of vertices with more than one edge or any Escher graphs.

We may now assume that $|T'| \leq 4$ and that the tail link graphs of vertices in $T'$ are never two independent edges.

\subsection{Case 3: $|T'| =0,1,2$}

First, note that if $H'$ has a triple with more than one edge $\{x,y,z\}$ then at least two of its vertices must be in $T'$ as a consequence of Lemma~\ref{observation}. Therefore, if $|T'|=0,1$, then $H'$ is oriented and so \[|E(H)| \leq |E(H')| < {n \choose 3}.\]

Moreover, if $T'=\{x,y\}$ and $H'$ is not oriented, then any vertex triple with more than one edge must have two edges of the form, \[zx \rightarrow y,zy \rightarrow x\] for some third vertex $z$. If there exist two such vertices $z_1 \neq z_2$ that satisfy this, then there would be an Escher graph. Hence, there is at most one vertex triple with more than one edge and it would have at most two edges. Therefore, \[|E(H)| \leq |E(H')| \leq {n \choose 3}+1.\]

\subsection{Case 4: $|T'| =3$}

First, suppose that there exists a triple, $\{x,y,z\}$ with all three possible edges. Then $T' = \{x,y,z\}$. Since any triple with multiple edges must intersect $T'$ in at least two vertices, then any additional such triple would make an Escher graph with one of the edges in $T'$. Therefore, $H'$ has exactly one triple of vertices with all three edges on it and no others. So \[|E(H) \leq |E(H')| \leq {n \choose 3} + 2.\] Moreover, to attain this number of edges, no triple of vertices can be empty of edges. In this case,$H'$ must be isomorphic to the first construction $H_1$.

Next, assume that no triple of vertices has all three edges and let $T' = \{x,y,z\}$. Therefore, $H'$ needs at least two triples of vertices that each hold two edges or else \[|E(H)| \leq |E(H')| \leq {n \choose 3} + 1\] automatically. Suppose one of the multiedges is $\{x,y,z\}$ itself. Then without loss of generality let the edges be $xy \rightarrow z$ and $xz \rightarrow y$. The second triple with two edges must have its third vertex in $D'$. Call this vertex $v$.The vertex $x$ cannot be in this second triple of vertices without creating an Escher graph. So the edges must be $vy \rightarrow z$ and $vz \rightarrow y$. But this also creates an Escher graph.

Therefore, neither of the two triples that hold two edges are contained entirely within $T'$. So without loss of generality they must be $vx \rightarrow y, vy \rightarrow x$ and $wy \rightarrow z, wz \rightarrow y$. If $v \neq w$, then $vx, wz \in T_y$, a contradiction to our assumption that $T'$ contains no vertices with tail link graphs that are two independent edges. Hence, $v=w$.

Since $v \in D$, then $T_v$ has at least three vertices. Moreover, since $v$ is in the tail link graphs of each vertex of $T'$, then none of these vertices can be in $T_v$. Remove all edges pointing to the vertices of $T'$. This is at most 6 edges. Add all possible edges with $v$ as the head and a tail set among the set $V(T_v) \cup \{x,y,z\}$. This adds at least 12 new edges. The new graph is oriented and $E$-free. Therefore, $|E(H)| <  {n \choose 3}$.

\subsection{Case 5: $|T'| =4$}

First, assume that there is some triple $\{x,y,z\}$ that contains all three possible edges. As before, there are no additional triples with more than one edge. So \[|E(H)| \leq |E(H')| \leq {n \choose 3} + 2.\] The first construction $H_1$ is the unique extremal construction under this condition since all triples must be used at least once.

So assume that all triples with more than one edge have two edges each. Then we must have at least two. Assume that one of them is contained within $T' = \{a,b,c,d\}$. Without loss of generality let it be $ab \rightarrow c, ac \rightarrow b$. Since the second such triple intersects $T'$ in at least two vertices, then it must intersect $\{a,b,c\}$ in at least one vertex.

If it intersects $\{a,b,c\}$ in two vertices, then without loss of generality (to avoid a copy of $E$) the second triple must be of the form $ab \rightarrow x, ax \rightarrow b$. Hence, $x \in T'$ so $x=d$.

But now there is no edge possible on $\{b,c,d\}$. Therefore, there must be a third such triple for $H'$ to have ${n \choose 3} +2$ edges. This triple must be $ac \rightarrow d, ad \rightarrow c$. And the only way to actually make it to the maximum number of edges now must be to have an edge on every other triple.

Every triple of the form $\{b,c,s\}$ for $s \in D$ must have the edge $bc \rightarrow s$ since the other two options would create an Escher graph. Similarly, $bd \rightarrow s$ and $cd \rightarrow s$ are the only options for triples of the form $\{b,d,s\}$ and $\{c,d,s\}$ respectively. Next, any triple of the form $\{a,b,s\}$ must hold the edge $ab \rightarrow s$ since the other two edges create Escher graphs. Similarly, every triple of the forms $\{a,c,s\}$ and $\{a,d,s\}$ must hold the edges $ac \rightarrow s$ and $ad \rightarrow s$ respectively.

Since each triple contained in $D$ holds exactly one edge, then the induced subgraph on $D$ must be isomorphic to the oriented extremal example of an $E$-free graph on $n-4$ vertices. Therefore, the entire graph $H'$ must be isomorphic to the second extremal construction $H_2$ in order to attain ${n \choose 3} + 2$ edges.

So assume that the second triple with two edges intersects $\{a,b,c\}$ in only one vertex. Then these edges must be $xa \rightarrow d, xd \rightarrow a$. This can be the only additional triple with two edges. So to make it to ${n \choose 3} +2$ edges we need each triple to have an edge. However, the edge for $\{a,b,d\}$ is forced to be $ad \rightarrow b$ and the edge for $\{b,c,d\}$ is forced to be $bc \rightarrow d$. This makes an Escher graph. So \[|E(H)| \leq |E(H')| \leq {n \choose 3} + 1.\]

Now assume that no vertex triple with multiple edges is contained entirely within $T'$, but assume that there are at least two such triples in $H'$. The only way that two triples could have distinct vertices in $D'$ is if they were of the forms (without loss of generality), $xa \rightarrow b, xb \rightarrow a$, and $yc \rightarrow d, yd \rightarrow c$. Otherwise, the pairs of the two triples that are in $T'$ would intersect resulting in either a copy of $E$ (if both triples use the same pair) or a vertex in $T'$ with two independent edges as a tail link graph.

So there must be exactly two such triples. Therefore, all other triples of vertices must contain exactly one edge in order to reach ${n \choose 3}+2$ edges overall. To avoid the forbidden subgraph this edge must be $ab \rightarrow c$ for the triple $\{a,b,c\}$ and $cd \rightarrow a$ for the triple $\{a,c,d\}$. But this is an Escher graph. Hence, not all triples may be used and so \[|E(H)| \leq |E(H')| \leq {n \choose 3} + 1.\]

Therefore, we may now assume for each multiedge triple that the vertex from $D'$ is always $x$. First, assume that there are only two such triples. As before, if we assume that the only two such triples are $xa \rightarrow b, xb \rightarrow a$ and $xc \rightarrow d, xd \rightarrow c$, then there can be not be an edge on both $\{a,b,c\}$ and $\{a,c,d\}$. Hence, there would be a suboptimal number of edges overall.

On the other hand, if the only two such triples are adjacent in $T'$, then they are, without loss of generality, $xa \rightarrow b, xb \rightarrow a$ and $xb \rightarrow c, xc \rightarrow a$. In this case, no edge can go on the triple $\{a,b,c\}$ at all and so there are at most ${n \choose 3} + 1$ edges overall.

Therefore, we must assume there are at least three such triples that meet at $x$. If these three triples make a triangle in $T'$, then they are $xa \rightarrow b, xb \rightarrow a$, $xb \rightarrow c, xc \rightarrow b$, and $xc \rightarrow a, xa \rightarrow c$. Again, there can be no edges on the triple $\{a,b,c\}$. Hence, every other triple must hold an edge to attain ${n \choose 3} + 2$ edges overall.

On the triple $\{a,b,d\}$ this edge must be $ab \rightarrow d$ to avoid making a copy of $E$. Similarly, we must have the edges $ac \rightarrow d$ and $bc \rightarrow d$. But this means that $d \not \in T'$, a contradiction.

On the other hand, if there are three triples of vertices with more than one edge on each that do not make a triangle in $T'$ or if there are four or more such triples, then $x$ is in the tail link graphs for each vertex in $T'$. Hence, none of these vertices may be in the tail link graph, $T_x$. However, $x \in D'$ so its tail link graph has at least three vertices. Remove all edges pointing to vertices of $T'$ (at most 8). Add all edges pointing to $x$ with tail sets in $T'$ (6 new edges) and between $T'$ and $V(T_x)$ (at least 12 new edges). So this adds at least ten edges to $H'$ to create $H''$. $H''$ is oriented so \[|E(H)| < |E(H'')| \leq {n \choose 3}.\]

This exhausts all of the cases and establishes that \[\text{ex}(n,E) = {n \choose 3} +2\] with exactly two extremal examples up to isomorphism.

\section{Conclusion}

In \cite{brown1969}, Brown and Harary started studying extremal problems for directed $2$-graphs by determining the extremal numbers for many ``small" digraphs and for some specific types of digraphs such as tournaments - a digraph where every pair of vertices has exactly one directed edge. We could follow their plan of attack in studying this $2 \rightarrow 1$ model and also look for the extremal numbers of tournaments. Here, a tournament would be a graph with exactly one directed edge on every three vertices. In particular, a transitive tournament might be an interesting place to begin. A transitive tournament is a tournament where the direction of each edge is based on an underlying linear ordering of the vertices as in the oriented lower bound construction in Theorem~\ref{exG}.

Denote the $2 \rightarrow 1$ transitive tournament on $k$ vertices by $TT_k$. Since the ``winning" vertex of the tournament will have a complete $K_{k-1}$ as its tail link graph, then any $H$ on $n$ vertices for which each $T_{x}$ is $K_{k-1}$-free must be $TT_k$-free. Therefore, \[n \left(\frac{n-1}{k-2}\right)^2 {k-2 \choose 2} \leq \text{ex}(n,TT_k), \text{ex}_o(n,TT_k).\] This also immediately shows that the transitive tournament on four vertices with the ``bottom" edge removed has this extremal number exactly.

\begin{theorem}
Let $TT_4^-$ denote the graph with vertex set $V(TT_4^-) = \{a,b,c,d\}$ and edge set \[E(TT_4^-)=\{ab \rightarrow d, bc \rightarrow d, ac \rightarrow d\}.\] Then \[\text{ex}(n,TT_4^-) = n \left\lfloor \frac{n-1}{2} \right\rfloor \left\lceil \frac{n-1}{2} \right\rceil.\]
\end{theorem}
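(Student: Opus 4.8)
The plan is to reduce the whole problem to Mantel's theorem on triangle‑free graphs via the tail link graphs. The key observation is that a graph $H$ contains a copy of $TT_4^-$ if and only if some tail link graph $T_x$ contains a triangle. Indeed, an injective homomorphism $\phi:TT_4^- \to H$ sends all three edges, which share the common head $d$, to edges of $H$ with the common head $\phi(d)$; writing $x=\phi(d)$, the three tail pairs $\{\phi(a),\phi(b)\}$, $\{\phi(b),\phi(c)\}$, $\{\phi(a),\phi(c)\}$ then form a triangle in $T_x$, and the four images are distinct by injectivity. Conversely, any triangle $pqr$ in a tail link $T_x$ yields the edges $pq\to x$, $qr\to x$, $pr\to x$, which is exactly a copy of $TT_4^-$. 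Hence $H$ is $TT_4^-$-free if and only if every $T_x$ is triangle‑free.

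First I would record the edge‑count identity $|E(H)| = \sum_{x\in V(H)} |E(T_x)|$, which holds in the standard model because each edge of $H$ has a unique head and so is counted exactly once, in the tail link of that head. Combining this with the characterization above and Mantel's theorem — a triangle‑free graph on $n-1$ vertices has at most $\left\lfloor \frac{n-1}{2}\right\rfloor\left\lceil \frac{n-1}{2}\right\rceil$ edges — gives the upper bound
\[|E(H)| = \sum_{x\in V(H)} |E(T_x)| \leq n\left\lfloor \frac{n-1}{2}\right\rfloor\left\lceil \frac{n-1}{2}\right\rceil.\]
For the matching lower bound I would use the construction implicit in the discussion preceding the theorem: for each vertex $x$, fix a balanced bipartition of $V(H)\setminus\{x\}$ and let $T_x$ be the complete bipartite graph across it, i.e. include the edge $yz\to x$ exactly when $y$ and $z$ lie on opposite sides of $x$'s partition. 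Each $T_x$ is then triangle‑free with exactly $\left\lfloor \frac{n-1}{2}\right\rfloor\left\lceil \frac{n-1}{2}\right\rceil$ edges, and summing gives equality. The point to emphasize is that in the standard model a single triple $\{x,y,z\}$ may simultaneously carry all three directed edges $yz\to x$, $xz\to y$, $xy\to z$, so the tail links of distinct vertices can be prescribed completely independently; this is precisely what allows every $T_x$ to attain the Mantel maximum at once.

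I expect the argument to be essentially routine once the characterization is established, so there is no serious obstacle. The only steps requiring care are verifying that an occurrence of $TT_4^-$ is genuinely equivalent to a triangle in some tail link — rather than some more degenerate overlap among the four vertices, which injectivity of $\phi$ rules out — and noting that the independence of tail links in the standard model is exactly what makes the per‑vertex Mantel bound achievable simultaneously across all $n$ vertices.
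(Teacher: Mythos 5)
Your proposal is correct and follows essentially the same route as the paper: the paper's (brief) justification is precisely that a copy of $TT_4^-$ corresponds to a triangle in the tail link graph of the head vertex, so the lower bound comes from making every $T_x$ a balanced complete bipartite graph (possible in the standard model since the tail links can be prescribed independently), and the upper bound follows from Mantel's theorem applied to each $T_x$ together with the identity $|E(H)|=\sum_x |E(T_x)|$. Your write-up merely makes explicit the injectivity check and the per-vertex counting that the paper leaves implicit.
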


Is it still true if we add an edge to $\{a,b,c\}$?

\begin{conjecture}
Let $TT_4$ denote the graph with vertex set $V(TT_4) = \{a,b,c,d\}$ and edge set \[E(TT_4)=\{ab \rightarrow d, bc \rightarrow d, ac \rightarrow d, ab \rightarrow c\}.\] Then \[\text{ex}(n,TT_4) = n \left\lfloor \frac{n-1}{2} \right\rfloor \left\lceil \frac{n-1}{2} \right\rceil.\]
\end{conjecture}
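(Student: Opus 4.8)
The plan is to establish the matching upper bound, since the lower bound is already in hand: because every edge of $TT_4^-$ is also an edge of $TT_4$, every $TT_4^-$-free graph is automatically $TT_4$-free, so the extremal construction of the preceding theorem is a legitimate $TT_4$-free graph and gives $\text{ex}(n,TT_4) \ge \text{ex}(n,TT_4^-) = n\lfloor\frac{n-1}{2}\rfloor\lceil\frac{n-1}{2}\rceil$. Writing $N = n-1$ and $M = \lfloor N^2/4\rfloor = \lfloor\frac{n-1}{2}\rfloor\lceil\frac{n-1}{2}\rceil$ for the Mantel number, I would first record a reformulation of $TT_4$-freeness in the language of tail link graphs. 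Call a vertex $x$ a \emph{dominator} of a triple $\{p,q,r\} \subseteq V(H)\setminus\{x\}$ if $pq\to x,\ qr\to x,\ pr\to x \in E(H)$, that is, if $\{p,q,r\}$ is a triangle of $T_x$. One checks directly from the definition of injective homomorphism that a copy of $TT_4$ is exactly a triple carrying an internal edge (one of $pq\to r$, $pr\to q$, $qr\to p$) together with a dominator of that triple. Hence $H$ is $TT_4$-free if and only if every dominated triple is empty: in contrast to the $TT_4^-$ problem, triangles are now \emph{permitted} in the tail link graphs, but only over triples of $H$ that carry no edge at all (recall a triple may carry up to three edges in the standard model).

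The upper bound rests on the identity $|E(H)| = \sum_{x\in V(H)} |E(T_x)|$, since each edge is counted once at its head. For every vertex $x$ whose tail link graph is triangle-free, Mantel's theorem gives $|E(T_x)| \le M$, which is precisely the per-vertex budget of the conjectured bound, so the whole difficulty is to control the vertices whose tail link graphs contain triangles. For such an $x$, write $|E(T_x)| = M + k_x$ with $k_x > 0$; by the Erd\H{o}s--Rademacher supersaturation theorem and its Lov\'{a}sz--Simonovits extension for larger excess, $T_x$ then contains at least $k_x\lfloor N/2\rfloor$ triangles (more when $T_x$ is very dense), each of which forces the corresponding triple to be empty in $H$. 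The strategy is to charge the total positive excess $\sum_{k_x>0} k_x$ against these forced-empty triples: summing the supersaturation bound yields $\lfloor N/2\rfloor\sum_{k_x>0}k_x \le \sum_x (\text{number of triangles of }T_x) = \sum_{\tau\text{ empty}} \delta(\tau)$, where $\delta(\tau)$ is the number of dominators of the empty triple $\tau$. One then hopes to show that an empty triple, contributing nothing to any of the three tail link graphs $T_p,T_q,T_r$ it meets, creates enough deficit (tail link graphs with $|E(T_x)| < M$) to cancel the excess, giving $\sum_x k_x \le 0$ and hence $|E(H)| \le nM$.

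The main obstacle is precisely this final balancing step, which is where the problem ceases to be a routine consequence of Mantel plus supersaturation. Two effects resist a clean accounting. First, a single empty triple may be dominated by many vertices at once---nothing in the $TT_4$-free condition forbids $\{p,q,r\}$ from having several dominators, since it remains empty---so the triangle count $\sum_x t_x$ can vastly overcount the number of distinct forced-empty triples and thereby weaken the charge. Second, and more seriously, an empty triple does \emph{not} by itself push any tail link graph below its Mantel budget: even the extremal construction leaves roughly half of all pairs as non-edges in each $T_x$, so ``missing'' edges are expected rather than costly, and the deficit needed to pay for the excess is simply not produced locally. I therefore expect a purely counting argument to be too lossy, and the correct route to require a stability analysis: show that a tail link graph with excess $k_x$ is structurally close to complete bipartite, use this to pin down where the forced-empty triples sit, and argue that concentrating excess necessarily forces genuine below-Mantel deficits elsewhere. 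Carrying out this stability step---rather than the supersaturation inequality itself---is the crux, which is consistent with the statement being posed only as a conjecture.
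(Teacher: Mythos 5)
The statement you were asked to prove is posed in the paper as a conjecture, immediately following the theorem on $TT_4^-$; the paper contains no proof of it, so there is no author's argument to compare yours against, and the only question is whether your attempt closes on its own. It does not, as you candidly acknowledge. To give credit where due, your preliminary reductions are all correct: the lower bound follows from the containment of $TT_4^-$ in $TT_4$ (or directly from the paper's construction in which every tail link graph is balanced complete bipartite); the characterization that $H$ is $TT_4$-free if and only if every triple forming a triangle in some $T_x$ carries no edge of its own is exactly right; and the identity $|E(H)| = \sum_x |E(T_x)|$, the Mantel bound $|E(T_x)| \le \lfloor (n-1)^2/4 \rfloor$ for triangle-free tail link graphs, and the Erd\H{o}s--Rademacher bound of at least $k_x \lfloor (n-1)/2 \rfloor$ triangles when $T_x$ has excess $k_x$ are all correctly invoked.

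The genuine gap is the final charging step, and your own diagnosis of why it fails is accurate: an empty triple imposes no cost on the three tail link graphs it meets, because even in the conjectured extremal configuration each $T_x$ is complete bipartite and so already omits roughly half of all pairs; a missing pair is therefore not a deficit below the Mantel budget, and the desired conclusion $\sum_x k_x \le 0$ does not follow from anything you have written. Your supersaturation inequality bounds the number of dominated (hence empty) triples from below in terms of the excess, but nothing bounds the excess from above in terms of empty triples, which is the direction the upper bound requires; the stability analysis you sketch in the last paragraph is a plausible plan but is not carried out, and carrying it out is the entire difficulty. So what you have produced is an honest and well-organized reduction of the conjecture to its actual core, not a proof of it --- which is consistent with the fact that the statement remains open in the paper.
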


Another way of generalizing the extremal questions asked in this paper is to ask about $r \rightarrow 1$ models of directed hypergraphs. If we look at every $(r \rightarrow 1)$-graph with exactly two edges we see that these fall into four main types of graph. Let $i$ be the number of vertices that are in the tail set of both edges. Then let $T_r(i)$ denote the graph where both edges point to the same head vertex, let $H_r(i)$ denote the graph where the edges point to different head vertices neither of which are in the tail set of the other, let $R_r(i)$ denote the graph where the first edge points to a head vertex in the tail set of the second edge and the second edge points to a head not in the tail set of the first edge, and let $E_r(i)$ denote the graph where both edges point to heads in the tail sets of each other. So in terms of the graphs discussed in this paper, the 3-resolvent would be a $R_2(1)$, the 4-resolvent would be a $R_2(0)$, the Escher graph would be a $E_2(0)$, and two edges on the same triple of vertices would be an $E_2(1)$.

The nondegenerate cases here would be $R_r(i)$ and $E_r(i)$. It would be interesting to find the extremal numbers for these graphs in general. To what extent do the current proofs extend to these graphs? For example, any $r \rightarrow 1$ transitive tournament on $n$ vertices would be $E_r(i)$-free. This solves the oriented version and gives a lower bound for the standard version: \[\text{ex}_o(n,E_r(i)) = {n \choose r+1}.\]

For the generalized resolvent configurations, the lower bound extremal constructions for $R_3$ and $R_4$ generalize easily to the $r \rightarrow 1$ setting, but are they ever tight? Can we generalize these constructions to get extremal numbers for all $R_r(i)$?

\bibliography{nondegenerate}
\bibliographystyle{plain}

\end{document}